\theoremstyle{definition}
\newtheorem{definition}{Definition}[section]
\theoremstyle{remark}
\theoremstyle{theorem}
\newtheorem{theorem}{Theorem}[section]
\newtheorem{lemma}[theorem]{Lemma}
\numberwithin{equation}{section}
\newcommand{\F}{\mathbb{F}_q}
\newcommand{\Fm}{\mathbb{F}_{q^m}}
\newcommand{\N}{\mathfrak{N}}
\title{The existence of primitive normal elements of quadratic forms over finite fields }
\author{ Himangshu Hazarika$^1$, Dhiren Kumar Basnet$^{2*}$ and Stephen D Cohen$^3$\\$^{1,2}$Department of Mathematical Sciences\\ Tezpur University, Assam, India\\
$^3$ 6 Bracken Road,
Portlethen, 
Aberdeen AB12 4TA, Scotland, UK }
\date{}
\begin{document}
\maketitle

\textbf{Key words:} Finite field, Primitive element, Free element, Normal basis, Character.\\
\par
\textbf{MSC:} 12E20, 11T23
\begin{abstract}
 For $q=3^r$ ($r>0$), denote by $\mathbb{F}_q$ the finite field of order $q$ and for a positive integer $m\geq2$, let $\mathbb{F}_{q^m}$ be its extension field of degree $m$. We establish a sufficient condition for  existence of a primitive normal element $\alpha$ such that $f(\alpha)$ is a primitive element, where $f(x)= ax^2+bx+c$, with $a,b,c\in \mathbb{F}_{q^m}$ satisfying $b^2\neq ac$ in $\mathbb{F}_{q^m}$ except for at most 9 exceptional pairs $(q,m)$. 
 \end{abstract}

\section{Introduction}
Given a prime power  $q$  and an integer $m\geq2$ , we denote the finite field of order $q$ by $\mathbb{F}_q$ and its extension field of degree $m$ by $\mathbb{F}_{q^m}$. A generator of the (cyclic)  multiplicative group $\mathbb{F}^*_{q^m}$ is called a {\em primitive} element. Further, an element $\alpha\in\mathbb{F}_{q^n}$ for which the set $\{\alpha,\alpha^q, \alpha^{q^2},\ldots, \alpha^{q^{m-1}}\}$ is a $\mathbb{F}_q$ basis of $\mathbb{F}_{q^m}$ is called a $normal$ element or a $free$ element; such a  basis is called a $normal$ basis.

 For the existence of both primitive and free elements we refer to \cite{10}. The simultaneous occurrence of primitive and free elements in $\mathbb{F}_{q^m}$ is given by the following theorems.
\begin{theorem}{\textbf{ \em (Primitive normal basis theorem)}}. In the finite field $\mathbb{F}_{q^m}$, there always exists some element which is simultaneously primitive and free.
\end{theorem}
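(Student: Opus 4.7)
The plan is to follow the classical Lenstra--Schoof character-sum strategy. I would express membership in each of the two sets (primitive elements and free elements) as an explicit sum over characters, combine them into a single counting formula, and show positivity via Weil-type bounds on hybrid character sums.

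For primitivity I would use the standard characteristic function
\[
\omega(\alpha) \;=\; \theta(q^m-1) \sum_{d \mid q^m-1} \frac{\mu(d)}{\phi(d)} \sum_{\chi_d} \chi_d(\alpha),
\]
where $\theta(n)=\phi(n)/n$ and the inner sum runs over multiplicative characters of $\mathbb{F}_{q^m}^*$ of exact order $d$. For freeness I would exploit the $\mathbb{F}_q[x]$-module structure on $\mathbb{F}_{q^m}$ induced by the Frobenius, together with the polynomial analogue of the M\"obius function $\mu_q$, to write
\[
\Omega(\alpha) \;=\; \Theta(x^m-1) \sum_{h \mid x^m-1} \frac{\mu_q(h)}{\Phi_q(h)} \sum_{\psi_h} \psi_h(\alpha),
\]
where the inner sum runs over additive characters of $\mathbb{F}_{q^m}$ whose $\mathbb{F}_q[x]$-order is exactly $h$, and $\Phi_q$ is the polynomial Euler function.

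Multiplying $\omega(\alpha)\Omega(\alpha)$ and summing over $\alpha \in \mathbb{F}_{q^m}$ converts the count $N$ of primitive free elements into a double sum of hybrid character sums of the form $\sum_\alpha \chi_d(\alpha)\psi_h(\alpha)$. The trivial pair $(d,h)=(1,1)$ contributes the main term $q^m$, while Weil's bound controls each non-trivial term by $q^{m/2}$ up to a small combinatorial factor. Extracting the main term yields a sufficient condition of the shape
\[
q^{m/2} \;>\; W(q^m-1)\,W(x^m-1),
\]
where $W$ denotes the number of squarefree divisors, and this inequality holds for all but finitely many $(q,m)$.

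The hard part will be handling the exceptional pairs where the bound fails, since $W(q^m-1)$ can outgrow $q^{m/2}$ when $q^m-1$ is divisible by many small primes. I would tackle these via a prime-sieve refinement that decouples a small distinguished set of prime divisors from the remainder, shrinking the exceptional list to a short finite catalogue which can be verified by direct computer search exhibiting an explicit primitive normal element in each surviving pair.
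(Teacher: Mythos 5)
Your outline is the standard Lenstra--Schoof character-sum argument combined with the Cohen--Huczynska sieve, which is precisely the method behind the proof the paper cites for this theorem (references [2] and [3]) and is the same machinery the paper itself develops in Sections 2--4 for its harder variant with $f(\alpha)$. The sketch is correct in approach and essentially identical to the established route, so there is nothing substantive to add beyond noting that the final finite catalogue can be handled either by direct computation (Lenstra--Schoof) or entirely by the sieve (Cohen--Huczynska).
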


 This result was first proved by Lenstra and Schoof in \cite{2}. Later on by using a sieving technique, Cohen and Huczynska \cite{3} provided a computer-free proof.   
   
\begin{theorem}{\textbf{\em (Strong primitive normal basis theorem $\cite{4}$)}} In the finite field $\mathbb{F}_{q^m}$, there exists some element $\alpha$ such that both $\alpha$ and $\alpha^{-1}$ are primitive normal, with exceptional pairs for $(q,m)$ are $ (2,3),(2,4),(3,4),(4,3)$ and $(5,4)$. 

\end{theorem}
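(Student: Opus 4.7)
The plan is to count
\[
N \;=\; \#\{\alpha \in \mathbb{F}_{q^m}^{*} : \alpha \text{ is primitive, and both } \alpha \text{ and } \alpha^{-1} \text{ are free over } \mathbb{F}_q\}
\]
and show $N \geq 1$ except for the five listed pairs. Since $\alpha$ is primitive if and only if $\alpha^{-1}$ is primitive, only one primitivity condition is needed. I would introduce the standard multiplicative characteristic function for primitivity,
\[
\rho(\alpha) \;=\; \theta(q^m-1)\sum_{d \mid q^m -1} \frac{\mu(d)}{\phi(d)} \sum_{\operatorname{ord}\chi = d} \chi(\alpha),
\]
with $\theta(e) = \phi(e)/e$, and its additive analogue for $\mathbb{F}_q$-freeness obtained from the $\mathbb{F}_q[x]$-module structure on $\mathbb{F}_{q^m}$ via the Frobenius,
\[
\kappa(\alpha) \;=\; \Theta(x^m-1) \sum_{g \mid x^m -1} \frac{\mu_q(g)}{\Phi_q(g)} \sum_{\operatorname{Ord}\psi = g} \psi(\alpha),
\]
where $\Phi_q, \mu_q$ are the polynomial Euler totient and M\"obius functions and $\Theta(g) = \Phi_q(g)/q^{\deg g}$. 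Then $N = \sum_{\alpha \in \mathbb{F}_{q^m}^{*}} \rho(\alpha)\,\kappa(\alpha)\,\kappa(\alpha^{-1})$.

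Expanding, each triple of characters $(\chi,\psi_1,\psi_2)$ contributes a term proportional to
\[
S(\chi,\psi_1,\psi_2) \;:=\; \sum_{\alpha \in \mathbb{F}_{q^m}^{*}} \chi(\alpha)\,\psi_1(\alpha)\,\psi_2(\alpha^{-1}).
\]
Fixing a canonical additive character $\psi_0$ of $\mathbb{F}_{q^m}$ and writing $\psi_i(\beta) = \psi_0(c_i \beta)$ for suitable $c_i \in \mathbb{F}_{q^m}$, we have $S = \sum_{\alpha \ne 0} \chi(\alpha)\,\psi_0(c_1\alpha + c_2\alpha^{-1})$. The trivial triple $\chi = \psi_1 = \psi_2 = 1$ yields the expected main contribution $\theta(q^m-1)\Theta(x^m-1)^2(q^m-1)$, while whenever at least one character is nontrivial Weil's theorem on mixed character sums for rational functions with at worst a simple pole at $0$ and at $\infty$ gives $|S| \leq 3\, q^{m/2}$ (and $S=0$ in the obviously degenerate cases). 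Carefully handling the mixed sum in the presence of both $\alpha$ and $\alpha^{-1}$ is the first technical point; the presence of two independent additive characters means one cannot simply absorb $\kappa(\alpha)\kappa(\alpha^{-1})$ into a single freeness condition, and the pole structure at $0$ must be tracked to apply the Weil estimate correctly.

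Combining these bounds produces a sufficient condition of the shape
\[
q^{m/2} \;>\; 3\,W(q^m-1)\,W(x^m-1)^{2},
\]
where $W(\cdot)$ counts square-free (polynomial) divisors. This inequality holds for all but finitely many $(q,m)$. The main obstacle is then shrinking the long list of putative exceptions down to only the five listed pairs: the plan is to follow the sieving device of Cohen--Huczynska~\cite{3}, factoring $q^m-1$ and $x^m-1$ each into a ``core'' together with a complementary set of small prime (polynomial) divisors that are peeled off and handled separately, so that the Weil penalty scales with the much smaller core $W$. Iterating this sieve for each stubborn $(q,m)$ and then verifying any residual cases by direct computer search over $\mathbb{F}_{q^m}$ should leave exactly the pairs $(2,3), (2,4), (3,4), (4,3), (5,4)$ as the genuine exceptions.
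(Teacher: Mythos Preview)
The paper does not prove this theorem. Theorem~1.2 is stated in the Introduction as a known background result, attributed to Cohen and Huczynska~\cite{4} (with the remark that Tian and Qi~\cite{5} first established it for $m\geq 32$), and no proof or proof sketch is given anywhere in the paper. There is therefore nothing in the paper to compare your proposal against.

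For completeness: your outline is broadly the strategy that~\cite{4} actually follows---express the count via the multiplicative and additive characteristic functions, bound the resulting hybrid sums $\sum_{\alpha\neq 0}\chi(\alpha)\psi_0(c_1\alpha+c_2\alpha^{-1})$ (twisted Kloosterman sums, for which the Weil bound is in fact $2q^{m/2}$ when $c_1c_2\neq 0$), deduce a sufficient inequality of the shape $q^{m/2}>C\,W(q^m-1)\,W(x^m-1)^2$, and then sieve. Your sketch is sound at this level, but the genuine work in~\cite{4} lies in the layered sieve on both the multiplicative and additive sides and in an extensive case analysis for small $q$ and $m$, none of which your proposal yet addresses. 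That, however, is a matter for~\cite{4}, not for the present paper.
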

This result was first proved by Tian and Qi in \cite{5} for $m\geq 32$. Later on Cohen and Huczynska \cite{4} completed the proof up to the above form by using a  sieving technique. 

The existence of a primitive element $\alpha \in \F$ for which $f(\alpha)$ is also primitive for an arbitrary quadratic in $\F[x]$ has been settled in \cite{1}.
\begin{theorem}[\bf{\cite{1}}]\label{T1}
For all $q>211$, there always exists an element $\alpha\in \mathbb{F}_{q^m}$ such that $\alpha$ and $f(\alpha)$ are both primitive, where $f(x)=ax^2+bx+c$ with $b^2-4ac\neq0$.
\end{theorem}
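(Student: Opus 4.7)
I would proceed by the character-sum framework standard for such primitivity problems. For each divisor $e$ of $q^m-1$, let
$$\Theta_e(\beta) \;=\; \frac{\phi(e)}{e}\sum_{d\mid e}\frac{\mu(d)}{\phi(d)}\sum_{\chi_d}\chi_d(\beta),$$
where $\chi_d$ ranges over multiplicative characters of $\Fm^*$ of order $d$; this equals $1$ if $\beta$ is $e$-free and $0$ otherwise, and taking $e=q^m-1$ detects primitive elements. Writing $N(f)$ for the number of $\alpha\in\Fm$ with $\alpha$ and $f(\alpha)$ both nonzero and primitive, the identity $N(f)=\sum_\alpha\Theta_{q^m-1}(\alpha)\,\Theta_{q^m-1}(f(\alpha))$ expands into a double sum indexed by pairs of divisors of $q^m-1$, whose inner kernels are character sums
$$S(\chi_1,\chi_2) \;=\; \sum_{\alpha\in\Fm}\chi_1(\alpha)\,\chi_2(f(\alpha)).$$

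I would then invoke Weil's theorem for multiplicative character sums of rational functions. Since $b^2-4ac\neq 0$ the quadratic $f$ has two distinct roots, so for any pair $(\chi_1,\chi_2)$ other than the trivial pair the combined character is nontrivial on $\Fm(x)^{*}$, and the Weil bound gives $|S(\chi_1,\chi_2)|\le 3\,q^{m/2}$. The trivial pair contributes the main term of order $q^m$, and collecting everything produces a sufficient condition of the form
$$q^{m/2} \;>\; 3\, W(q^m-1)^2,$$
where $W(n)=2^{\omega(n)}$ is the number of squarefree divisors of $n$.

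When $q^m-1$ carries many small prime factors this direct bound fails, so I would bring in the Cohen--Huczynska sieve. Picking a divisor $\ell$ of $q^m-1$ built from its smaller prime factors allows one to replace $W(q^m-1)$ by $W(\ell)$, at the cost of an additive correction $\delta^{-1}$, where $\delta = 1-\sum(p-1)^{-1}$ and the sum runs over primes $p\mid q^m-1$ with $p\nmid\ell$. A judicious choice of $\ell$ typically drops the admissible threshold from an astronomical starting value down to $q$ of order a few hundred.

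The main obstacle, as usual for arguments of this flavour, is the endgame. Even after the sieve, a short list of pairs $(q,m)$ with $q>211$ may remain for which the inequality cannot be verified generically, and each such pair must be cleared by an ad hoc refinement: factoring $q^m-1$ explicitly, optimising the choice of $\ell$ for that particular pair, and confirming the sharpened criterion by direct computation. Reducing the surviving list to the empty set while holding the cutoff sharp at $211$ is where the bookkeeping becomes delicate.
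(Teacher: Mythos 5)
This theorem is not proved in the paper at all: it is imported verbatim from the cited work of Booker, Cohen, Sutherland and Trudgian, so there is no internal proof to measure your attempt against. That said, your framework --- characteristic functions $\Theta_e$ built from multiplicative characters, expansion of $N(f)$ into sums $S(\chi_1,\chi_2)=\sum_\alpha \chi_1(\alpha)\chi_2(f(\alpha))$, the Weil bound using the distinctness of the roots of $f$ (guaranteed by $b^2-4ac\neq 0$), and the Cohen--Huczynska prime sieve to tame $W(q^m-1)$ --- is exactly the machinery of the cited source and of the present paper's own Sections 3 and 4. One small quantitative remark: for this purely multiplicative problem the relevant bound (Lemma 2.3 of the paper, due to Wan) gives $|S(\chi_1,\chi_2)|\le (1+2-1)q^{m/2}=2q^{m/2}$; the constant $3$ only enters in this paper because of the extra additive character carrying the normality condition. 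Your bound of $3q^{m/2}$ is still valid, merely lossier.

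The genuine shortfall is that what you have written is a plan rather than a proof. The entire content of the theorem is the explicit threshold $211$, and nothing in your outline produces that number: the generic inequality $q^{m/2}>cW(q^m-1)^2$ fails for a large set of small $q$, the sieve with a well-chosen divisor $\ell$ reduces but does not eliminate this set, and the cited authors then had to factor $q^m-1$ for each survivor and, for a residual list, perform direct computer searches. You correctly identify this endgame as ``where the bookkeeping becomes delicate,'' but you do not carry it out, nor could the cutoff $211$ be certified without it. So the proposal is a faithful description of the right strategy, consistent with how the result is actually proved in the literature, but it does not constitute a proof of the stated theorem.
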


In this paper we consider an extension of Lemma \ref{T1} by posing the existence question for primitive elements $\alpha$ of $\Fm$  that  are normal over $\F$ and for which $f(\alpha)$ is also primitive for an arbitrary  quadratic polynomia (with distinct roots)  $f(x) \in \Fm[x]$.  

We apply the Lenstra-Schoof method \cite{2} by using character sums. But for more accurate results, we use the sieving technique provided by Cohen and Huczynska \cite{3, 4}.

Finally, we apply the existence conditions to fields in which $q$ is a power of the prime $3$.

\section{Preliminaries} 
The additive group of $\mathbb{F}_{q^m}$ is a $\mathbb{F}_{q}[x]$-module under the rule $f\,o\,\alpha=\overset{n}{\underset{i=1}{\sum}} a_i\alpha^{q^i}$; for $\alpha\in \mathbb{F}_{q^m}$ and 
 $f\,= \overset{n}{\underset{i=1}{\sum}}a_ix^i\thinspace \in \mathbb{F}_{q}[x]$. For $\alpha \in \mathbb{F}_{q^m}$, the $\mathbb{F}_q$-order of $\alpha$ is the monic $\mathbb{F}_q$-divisor $g$ of $x^m-1$ of minimal degree such that $g\,o\,\alpha=0$. Hence the annihilator of $\alpha$ has unique monic generator which we define as $Order\, of\, \alpha$ and denote by Ord($\alpha$). It is clear that an element in $\mathbb{F}_{q^m}$ is free if and only if its order is exactly $x^m-1$. 
 
  Now for $\alpha\in \mathbb{F}^*_{q^m}$, the multiplicative order is denoted by ord($\alpha$) and $\alpha$ is primitive if and only if ord$(\alpha)=q^m-1$. From the definitions it is clear that $q^m-1$ and $x^m-1$ can be replaced by their radicals $q_0$ and $f_0\,:=\, x^{m_0}-1 $ respectively, where $m_0$ is such that $m=m_0p^a$, where $a$ is a non negative integer and gcd$(m_0,p)=1$.
  
   Furthermore, we use the following definitions and Lemmas in our result. 
 \begin{definition} 
 Let $G$ be a finite abelian group. Then a character $\chi$ of $G$ is a homomorphism from $G$ into the group $S^1:= \{z \in \mathbb{C} : |z| = 1\}$. The characters of $G$ form a group under multiplication called $\mathit{dual\thinspace group}$ or $\mathit{character\thinspace group}$ of $G$ which is denoted by $\widehat{G}$. It is well known that $\widehat{G}$ is isomorphic to $G$. Again the character $\chi_0$ is denoted for the trivial character of $G$ defined as $\chi_0(a)=1$ for all $a \in G$.
  \end{definition}
 
  In a finite field $\mathbb{F}_{q^m}$, there are two types of abelian groups, namely additive group $\mathbb{F}_{q^m}$ and multiplicative group $\mathbb{F}^*_{q^m}$. So, there are two types of characters of a finite field $\mathbb{F}_{q^m}$, namely $\mathit{additive \thinspace character }$ of $\mathbb{F}_{q^m}$ and $\mathit{multiplicative \thinspace character }$ of $\mathbb{F}^*_{q^m}$. Multiplicative characters are extended from $\mathbb{F}^*_{q^m}$ to $\mathbb{F}_{q^m}$ by the  rule
 \hspace{.1cm}  $\chi(0)=\begin{cases}
                         0 \,\mbox{ if}\, \chi\neq\chi_0\\
                         1 \,\mbox{ if}\, \chi=\chi_0 
                         \end{cases} $

   Since $\widehat{\mathbb{F}^*_{q^m}} \cong \mathbb{F}^*_{q^m}$, so $\widehat{\mathbb{F}^*_{q^m}}$ is cyclic and for any divisor $d$ of $q^m-1$, there are exactly $\phi(d)$ characters of order $d$ in $\widehat{\mathbb{F}^*_{q^m}}$.

   Let $e|q^m-1$, then $\alpha$ is called $e-free$ if $d|e$ and $\alpha= \beta^d$, for some $\beta\in \Fm$ implies $d=1$. Furthermore $\alpha$ is primitive if and only if $\alpha= \beta^e$, for some $\beta \in \Fm$ and $e|q^m-1$ implies $e=1$.

 For any $e|q^m-1$, Cohen and Huczynska \cite{3, 4} defined  the character function for the subset of $e$-free elements of $\mathbb{F}^*_{q^m}$ by
 $$\rho_e: \alpha\mapsto\theta(e)\underset{d|e}{\sum}(\frac{\mu(d)}{\phi(d)}\underset{\chi_d}{\sum}\chi_d(\alpha)),$$
     where $\theta(e):=\frac{\phi(e)}{e}$, $\mu$ is the M\"obius function and $\chi_d$ stands for any multiplicative character of order $d$.
   For any $e|q^m-1$, we use ``integral'' notation due to Cohen and Huczynska \cite{3, 4}, for weighted sums as follows
\begin{align*}
\underset{d|q^m-1}{\int}\chi_d:= \underset{d|q^m-1}{\sum}\frac{\mu(d)}{\phi(d)}\underset{\chi_d}{\sum}\chi_d
\end{align*}

 Then they defined the  characteristic function for the subset of $e$-free elements of $\mathbb{F}^*_{q^m}$, as follows
\begin{align*}
\rho_e: \alpha\mapsto\theta(e)\underset{d|e}{\int}\,\chi_d(\alpha)
\end{align*}
    
     Again, for any monic $\mathbb{F}_q$-divisor $g$ of $x^m-1$, a typical additive character $\psi_g$ of $\mathbb{F}_q$-order $g$ is one such that $\psi_g o g$ is the trivial character of $\mathbb{F}_{q^m}$ and $g$ is of minimal degree satisfying this property. 
    Furthermore, there are $\Phi(g)$ characters $\psi_g$, where $\Phi(g)= (\mathbb{F}_q[x]/g\mathbb{F}_q[x])^*$ is the analogue of Euler function over $\mathbb{F}_{q}[x]$. 
    
    Then the character function for the set of $g$-free elements in $\mathbb{F}_{q^m}$, for any $g|x^m-1$ is given by
    $$\kappa_g :\alpha\mapsto\Theta(g)\underset{f|g}{\sum}(\frac{\mu^\prime(f)}{\Phi(f)}\underset{\psi_f}{\sum}\psi_f(\alpha)),$$
    where $\Theta(g):= \frac{\Phi(g)}{q^{deg(g)}}$, the  sum runs over all additive characters $\psi_f$ of $\mathbb{F}_q$-order g and $\mu^\prime$ is the analogue of the M\"obius function which is defined as follows: 
      $$\mu^\prime(g)=\begin{cases}
                         (-1)^s\hspace{.2cm} \mbox{if \,g \, is \, a \, product\, of }\, s\,\mbox{ distinct\, irreducible\, monic\, polynomials} \\
                        \hspace{.1cm} 0 \qquad\mbox{otherwise}\\ 
                         \end{cases} $$ 
 
      We use the ``integral'' notation for weighted sum of additive characters as follows
\begin{align*}
\underset{f|g}{\int}\psi_f:=\underset{f|g}{\sum}\frac{\mu^\prime(f)}{\Phi(f)}\underset{\psi_f}{\sum}\psi_f
\end{align*}     
   Then the character function for the set of $g$-free elements in $\mathbb{F}_{q^m}$, for any $g|x^m-1$ is given by
\begin{align*}
\kappa_g :\alpha\mapsto\Theta(g)\underset{f|g}{\int}\,\psi_f(\alpha)
\end{align*}
From \cite{5}, we have the following about the typical additive character.
 Let $\lambda$ be the canonical additive character of $\mathbb{F}_q$. Thus for $\alpha\in \mathbb{F}_q$ this character is defined as  $\lambda(\alpha)= \exp^{2\pi iTr(\alpha)/p}$; where $Tr(\alpha)$ is absolute trace of $\alpha$ over $\mathbb{F}_p$.

 Now let $\psi_0$ be canonical additive character of $\mathbb{F}_{q^m}$; which is simply the lift of $\lambda$ to $\mathbb{F}_{q^m}$ i.e., $\psi_0(\alpha)=\lambda(Tr(\alpha)), \, \alpha\in 
\mathbb{F}_{q^m}$. Now for any $\delta\in \mathbb{F}_{q^m}$, let $\psi_\delta$ be the character defined by $\psi_\delta(\alpha)=\psi_0(\delta\alpha), \, \alpha\in \mathbb{F}_{q^m}$.
Define the subset $\Delta_g$ of $\mathbb{F}_{q^m}$ as the set of $\delta$ for which $\psi_\delta$ has $\mathbb{F}_{q}$-order $g$. So we may also write $\psi_{\delta_g}$ for $\psi_\delta$, where $\delta_g\in \Delta_g$. So with the help of this we can express any typical additive character $\psi_g$ in terms of $\psi_{\delta_g}$ and further we can express this in terms of canonical additive character $\psi_0$.  

\begin{lemma} {\bf\cite{10} }
If $\chi$ is any nontrivial character of a finite abelian group $G$ and $\alpha\in G$ any nontrivial element, then 
$$\underset{\alpha\in G}{\sum}\chi(\alpha)=0 \quad  \mbox{and} \quad \underset{\chi\in \widehat{G}}{\sum}\chi(\alpha)=0.$$ 
\end{lemma}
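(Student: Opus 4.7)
The plan is to prove both identities by the classical shift-and-compare trick that exploits the homomorphism property of characters together with the invariance of summation under a bijection of the index set. Neither identity requires any machinery beyond the definition of a character and the fact that, for finite abelian $G$, characters separate points.

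For the first identity, fix the nontrivial character $\chi$ and choose $\beta\in G$ with $\chi(\beta)\neq 1$; such a $\beta$ exists precisely because $\chi$ is assumed nontrivial. Set $S:=\sum_{\alpha\in G}\chi(\alpha)$. Then
\begin{equation*}
\chi(\beta)\,S \;=\; \sum_{\alpha\in G}\chi(\beta\alpha) \;=\; \sum_{\alpha'\in G}\chi(\alpha') \;=\; S,
\end{equation*}
where the middle equality uses the bijection $\alpha\mapsto\beta\alpha$ of $G$. Hence $(\chi(\beta)-1)S=0$, which forces $S=0$.

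For the second identity I would mirror the argument inside the dual group $\widehat{G}$. Fix the nontrivial element $\alpha\in G$ and set $T:=\sum_{\chi\in\widehat{G}}\chi(\alpha)$. The essential input here is the separation-of-points property: there exists $\chi'\in\widehat{G}$ with $\chi'(\alpha)\neq 1$. Granting this, replace the summation index by $\chi\mapsto\chi'\chi$, which is a bijection of $\widehat{G}$, to get $\chi'(\alpha)T=T$, whence $T=0$.

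The only step that is not purely formal is the separation-of-points claim used in the second half, and this is the point I expect to spell out. Since $G$ is a finite abelian group, the structure theorem decomposes $G$ as a direct sum of cyclic factors $\mathbb{Z}/n_i$, and on each factor one can exhibit a character of order $n_i$ that takes a prescribed nonzero component to a nontrivial root of unity; tensoring with trivial characters on the remaining factors produces the required $\chi'$. Alternatively one may invoke the isomorphism $\widehat{G}\cong G$ recorded in Definition~2.1 together with a dimension count. Beyond this observation, the proof is a few lines of index manipulation.
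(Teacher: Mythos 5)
Your proof is correct and is essentially the standard argument from the cited reference (the paper itself gives no proof, deferring to Lidl--Niederreiter, where exactly this shift-and-compare argument appears). The one step you rightly flag as non-formal --- that characters of a finite abelian group separate points --- is handled adequately by your appeal to the structure theorem, so nothing further is needed.
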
.

\begin{lemma} \label{charbound1}
{\bf  (\cite{6})}
Consider any two nontrivial multiplicative characters $\chi_1,\chi_2$ of the finite field $\mathbb{F}_{q^m}$. Again, let $f_1(x)$ and $f_2(x)$ be two monic pairwise co-prime polynomials in $\mathbb{F}_{q^m}[x]$, such that at least one of $f_i(x)$ is of the form $g(x)^{ord(\chi_i)}$ for $i=1,2$; where $g(x)\in \mathbb{F}_{q^m}[x]$ with degree at least 1. Then 
$$\Big|\underset{\alpha\in \mathbb{F}_{q^m}}{\sum}\chi_1(f_1(\alpha))\chi_2(f_2(\alpha))\Big|\leq (n_1+n_2-1)q^{m/2},$$
 where $n_1$ and $n_2$ are the degrees of largest square free divisors of $f_1$ and $f_2$, respectively. 
\end{lemma}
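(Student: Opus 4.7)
The plan is to combine the two multiplicative characters into a single one and then invoke the classical Weil bound for polynomial character sums.

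I would first set $d = \mathrm{lcm}(\mathrm{ord}(\chi_1), \mathrm{ord}(\chi_2))$, which divides $q^m-1$. Since $\widehat{\Fm^*}$ is cyclic, one may fix a single multiplicative character $\chi$ of order exactly $d$ and write $\chi_i = \chi^{a_i}$ with $a_i = d/\mathrm{ord}(\chi_i)$, so that $\mathrm{ord}(\chi^{a_i}) = \mathrm{ord}(\chi_i)$. The target sum then collapses to a single-character sum
\[
S \;=\; \sum_{\alpha\in\Fm} \chi\bigl(F(\alpha)\bigr),\qquad F(x):=f_1(x)^{a_1}f_2(x)^{a_2}\in\Fm[x].
\]

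Next I would apply the classical Weil bound for multiplicative character sums along a polynomial argument (for instance, Lidl--Niederreiter, Theorem~5.41): provided $F$ is not of the form $c\cdot h(x)^d$ for any constant $c\in\Fm^*$ and any $h(x)\in\overline{\Fm}[x]$, one obtains $|S|\leq (D-1)\,q^{m/2}$, where $D$ denotes the number of distinct roots of $F$ in $\overline{\Fm}$.

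The main obstacle will be verifying this non-degeneracy of $F$ under the stated hypothesis. Because $f_1$ and $f_2$ are coprime, $F$ is a $d$-th power (up to constants) in $\overline{\Fm}[x]$ iff $f_i^{a_i}$ is a $d$-th power separately for each $i=1,2$. Since $a_i=d/\mathrm{ord}(\chi_i)$, this occurs exactly when the multiplicity of every root of $f_i$ is divisible by $\mathrm{ord}(\chi_i)$, i.e.\ $f_i$ itself takes the shape $g(x)^{\mathrm{ord}(\chi_i)}$. The hypothesis rules out precisely this configuration for at least one $i$, so Weil's theorem becomes applicable. A final short count finishes the proof: since $f_1$ and $f_2$ are coprime, their root sets in $\overline{\Fm}$ are disjoint, so $F$ has exactly $n_1+n_2$ distinct roots; substituting $D=n_1+n_2$ yields the stated bound $|S|\leq (n_1+n_2-1)\,q^{m/2}$.
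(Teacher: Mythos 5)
This lemma is quoted from Wan's paper \cite{6} and is not proved in the text, so there is no internal argument to compare against; your reconstruction is a legitimate and essentially standard way to obtain the two-character bound from the classical one-character Weil bound, and it is correct up to two small points. First, a slip: if $\chi$ generates the cyclic group of characters of order dividing $d=\mathrm{lcm}(\mathrm{ord}(\chi_1),\mathrm{ord}(\chi_2))$, you can only assert $\chi_i=\chi^{b_i}$ for some $b_i$ with $\gcd(b_i,d)=d/\mathrm{ord}(\chi_i)$, not $\chi_i=\chi^{a_i}$ with $a_i=d/\mathrm{ord}(\chi_i)$ exactly; this does not damage the argument, since for $F=f_1^{b_1}f_2^{b_2}$ the multiplicity of a root $\alpha$ of $f_i$ in $F$ is $b_i m_\alpha$, and $d\mid b_i m_\alpha$ iff $\mathrm{ord}(\chi_i)\mid m_\alpha$, which is exactly the computation you need (here coprimality of $f_1,f_2$ keeps the two root sets disjoint, as you note). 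Second, the hypothesis as printed in the paper reads ``at least one of $f_i(x)$ \emph{is} of the form $g(x)^{\mathrm{ord}(\chi_i)}$,'' which is evidently a typo for ``is \emph{not} of the form'' (otherwise the sum can trivially be of size $q^m$); you have read it in the corrected sense, which is the right call and is what makes the Weil non-degeneracy condition for $F$ hold. With those caveats, folding the two characters into one via cyclicity of $\widehat{\mathbb{F}_{q^m}^*}$ and invoking Lidl--Niederreiter Theorem~5.41 with $D=n_1+n_2$ distinct roots gives precisely $(n_1+n_2-1)q^{m/2}$. What Wan's treatment buys over yours is generality (it handles the analogous bound in settings where one argues directly with the $L$-function of the relevant Kummer cover and gets refinements on the number of zeros), but for the statement as used in this paper your derivation is sufficient.
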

\begin{lemma}\label{charbound2} {\bf (\cite{7}) } Let $f_1(x), f_2(x),\ldots, f_k(x)\in \mathbb{F}_{q^m}[x]$ be distinct irreducible polynomials over $\mathbb{F}_{q^m}$. Let $\chi_1,\chi_2, \ldots,\chi_k$ be multiplicative characters and $\psi$ be a non-trivial additive character of $\mathbb{F}_{q^m}$. Then
 $$\left | \underset{\underset{f_i(\alpha)\neq0}{\alpha\in\mathbb{F}_{q^m}}}{\sum} \chi_1(f_1(\alpha))\chi_2(f_2(\alpha))\ldots\chi_k(f_k(\alpha))\psi(\alpha)\right|\leq n\,q^{m/2}$$ 
 where $n= \overset{k}{\underset{j=1}{\sum}}deg(f_j)$. 
\end{lemma}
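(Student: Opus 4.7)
The plan is to interpret this sum geometrically and then appeal to Weil's theorem on exponential sums along algebraic curves. I would encode the multiplicative part $\prod_{i=1}^{k}\chi_i(f_i(\alpha))$ by the Kummer covering of the affine line obtained by adjoining $y_i$ with $y_i^{d_i}=f_i(x)$, where $d_i$ is the order of $\chi_i$, and the additive twist $\psi(\alpha)$ by an Artin--Schreier covering $z^p-z=x$. Forming the compositum $C\to\mathbb{P}^1_{\Fm}$ of these two types of covers, the sum on the left-hand side of the lemma becomes, up to normalisation, a twisted $\Fm$-point count on $C$ weighted by a character of $\mathrm{Gal}(C/\mathbb{P}^1)$ built from the $\chi_i$ and $\psi$.

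Next, I would invoke the Weil bound in the form $|S|\le (2g(C)+c)q^{m/2}$, where $g(C)$ is the genus of the smooth compactification of $C$ and $c$ counts contributions from fibres above the zeros of $\prod_i f_i$ and above $\infty$. The genus itself is estimated via Riemann--Hurwitz. Because each $f_i$ is irreducible over $\Fm$, the Kummer covers ramify tamely above its $\deg(f_i)$ geometric zeros, while the Artin--Schreier cover is wildly ramified only at $\infty$, with Swan conductor equal to one. A careful accounting of these local contributions, summed over $i$, produces the inequality $2g(C)+c\le n$, with $n=\sum_{j=1}^{k}\deg(f_j)$, which is precisely the stated bound.

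The main obstacle lies in verifying that the rank-one $\ell$-adic sheaf $\bigotimes_i f_i^{*}\mathcal{L}_{\chi_i}\otimes\mathcal{L}_\psi$ encoding the sum is geometrically irreducible and is not isomorphic to a constant (or Tate twist of a constant) sheaf; otherwise Weil's theorem would leave a ``main term'' behind the stated error. Here the hypothesis that $\psi$ is a nontrivial additive character is essential: the corresponding Artin--Schreier sheaf is wildly ramified at $\infty$ with positive Swan conductor, which forces the composite character of $\mathrm{Gal}(C/\mathbb{P}^1)$ to be nontrivial, regardless of whether some of the $\chi_i$ are equal or combine to a trivial character. The delicate bookkeeping at $\infty$, where tame Kummer contributions mix with the wild Artin--Schreier contribution and determine whether the inequality is tight, is the technical heart of the argument.
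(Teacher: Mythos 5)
The paper does not actually prove this lemma: it is quoted directly from Fu and Wan \cite{7}, so there is no internal argument to compare yours against. Your sketch follows essentially the route taken in that reference and in the standard cohomological proofs of Weil-type bounds for mixed character sums: encode the factors $\chi_i(f_i(\alpha))$ by Kummer sheaves, the twist $\psi(\alpha)$ by an Artin--Schreier sheaf, observe that the tensor product is a rank-one sheaf on $\mathbb{A}^1$ minus the zeros of $\prod_i f_i$ which is wildly ramified at infinity and hence not geometrically constant, and apply the Riemann hypothesis for curves over finite fields. Your last paragraph correctly isolates the point that makes the lemma unconditional in the $\chi_i$: the positive Swan conductor at infinity forced by the nontrivial $\psi$ rules out any main term, whatever the multiplicative characters do (including the degenerate case where their product is trivial).

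The one place your accounting fails if read literally is the bound $|S|\le (2g(C)+c)q^{m/2}$ with $C$ the full Kummer--Artin--Schreier compositum. The genus of that compositum grows with the orders $d_i$ of the $\chi_i$ and with the characteristic (already for $k=1$ and $f_1=x$ the curve $z^p-z=y^{d}$ has genus $(p-1)(d-1)/2$), so no inequality of the form $2g(C)+c\le n$ can hold, and counting points on $C$ itself yields a constant far larger than $n$. What must be bounded is the degree of the single $L$-function attached to the chosen character of $\mathrm{Gal}(C/\mathbb{P}^1)$, equivalently $\dim H^1_c$ of the rank-one sheaf $\bigotimes_i f_i^{*}\mathcal{L}_{\chi_i}\otimes\mathcal{L}_\psi$. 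By Grothendieck--Ogg--Shafarevich this equals $-(1-n)+\mathrm{Swan}_\infty=n$, since the $n$ distinct geometric zeros of $\prod_i f_i$ (the product is squarefree because the $f_i$ are distinct irreducibles) are tame and infinity contributes Swan conductor $1$; that is exactly the constant in the statement. With that correction --- replacing the genus of $C$ by the conductor of the one relevant character --- your argument is complete and matches the method of the cited source.
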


\section{A lower bound for $\mathfrak{N}(e_1,e_2,g)$}

  We are trying to estimate some results on the primitive normal elements $\alpha$ such that $f(\alpha)$ is primitive in $\mathbb{F}_{q^m}$. Initially we are considering $q$ as power of odd prime $p$ as $q=p^k$, where $k$ is a positive integer. Take $e_1,e_2$ such that $e_1, e_2|q^m-1$ and $g$ such that $g|x^m-1$.
Considering $\mathfrak{N}(e_1,e_2,g)$ to be the number of $\alpha\in\mathbb{F}_{q^m}$ such that $\alpha$ is both $e_1$-free and $g$-free and $f(\alpha)$ is $e_2$-free, where $f(x)=ax^2+bx+c$ and $a,b,c\in\mathbb{F}_{q^m}$, $b^2-4ac\neq 0$. 

 We use the notations $\omega(n)$ and $g_d$ to denote number of prime divisors of $n$ and the number of monic irreducible factors of $g$ over $\mathbb{F}_{q}$ respectively. For calculations we use $W(n):=2^{\omega(n)}$  and $\Omega(g):=2^{g_d}$.

\begin{theorem} Let $f(x)=ax^2+bx+c$ with $a,b,c\in \mathbb{F}_{q^m}$ and $b^2-4ac\neq0$.  Suppose $e_1, e_2$ divide $q^m-1$ and $g|x^m-1$.Then 
\begin{equation} \label {cond0}
\mathfrak{N}(e_1, e_2, g) \geq \theta(e_1) \theta(e_2)\Theta(g)q^{m/2} \left( q^{m/2}-3 W(e_1)W(e_2)\Omega(g)\right).
\end{equation}
Hence $\N(e_1,e_2,g) > 0$ whenever
\begin{equation}\label{cond1}
q^{\frac{m}{2}} > 3W(e_1)W(e_2)\Omega(g).
\end{equation}

In particular, 
$\mathfrak{N}(q^m-1,q^m-1,x^m-1)>0$ if
\begin{equation} \label{cond}
q^{m/2}>3W(q^m-1)^2\Omega(x^m-1),
\end{equation}
 i.e., this is a sufficient condition for a field $\mathbb{F}_{q^m}$ to have an element $\alpha$ which is primitive normal and $f(\alpha)$ is also primitive.
\end{theorem}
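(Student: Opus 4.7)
The plan is to express $\mathfrak{N}(e_1,e_2,g)$ as a sum over $\mathbb{F}_{q^m}$ of the product of the three characteristic functions introduced in Section~2, namely
\[
\mathfrak{N}(e_1,e_2,g) \;=\; \sum_{\alpha\in\Fm} \rho_{e_1}(\alpha)\,\rho_{e_2}(f(\alpha))\,\kappa_g(\alpha),
\]
expand each factor using the integral notation, and interchange the sum over $\alpha$ with the three ``integrations'' over $d_1\mid e_1$, $d_2\mid e_2$ and $h\mid g$. This reduces the problem to estimating, for each triple $(\chi_{d_1},\chi_{d_2},\psi_h)$, the character sum
\[
S(\chi_{d_1},\chi_{d_2},\psi_h) \;:=\; \sum_{\alpha\in\Fm} \chi_{d_1}(\alpha)\,\chi_{d_2}(f(\alpha))\,\psi_h(\alpha).
\]

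The next step is to isolate the unique triple in which all three characters are trivial, namely $d_1=d_2=1$ and $h=1$; this contributes exactly $\theta(e_1)\theta(e_2)\Theta(g)\,q^m$, the main term. For every other triple at least one character is nontrivial, and I would establish the uniform bound $|S|\le 3\,q^{m/2}$ as follows. When $\psi_h$ is nontrivial, this is a direct application of Lemma~\ref{charbound2} to the polynomials $x$ and $f(x)=ax^2+bx+c$ (or, after splitting, to the distinct linear factors of $f$), whose degrees sum to at most $3$. When $\psi_h$ is trivial but a multiplicative character is nontrivial, the same bound follows from Lemma~\ref{charbound1} applied to the coprime pair $\{x,f(x)\}$; the hypothesis $b^2-4ac\ne 0$ ensures that $f$ is squarefree, so the largest squarefree divisors have total degree $3$ and the lemma yields $(n_1+n_2-1)q^{m/2}=2q^{m/2}\le 3q^{m/2}$. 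A small amount of bookkeeping is needed in the degenerate sub-cases (for instance $c=0$, so that $x\mid f(x)$, or $f$ splitting with one linear factor equal to $x$), but in each of them the number of genuinely distinct irreducible factors still does not exceed $3$, and the bound persists.

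Combining the pieces and using the standard identities $\sum_{d\mid e}|\mu(d)|=W(e)$ and $\sum_{f\mid g}|\mu'(f)|=\Omega(g)$ (the factors $1/\phi(d)$ and $1/\Phi(f)$ cancelling exactly against the number of characters of each order), the aggregate absolute error is at most $3\,\theta(e_1)\theta(e_2)\Theta(g)\,q^{m/2}\,W(e_1)W(e_2)\Omega(g)$. Subtracting this from the main term yields precisely the claimed inequality \eqref{cond0}. The positivity condition \eqref{cond1} follows by demanding the bracket in \eqref{cond0} to be positive, and \eqref{cond} is the specialization $(e_1,e_2,g)=(q^m-1,q^m-1,x^m-1)$.

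The main obstacle I anticipate is the uniform application of the Weil-type estimates across all configurations of $f(x)$ relative to the point $0$ and to the triviality pattern of the characters: one must check that when characters ``collapse'' (for example, a factor $\chi_{d_1}$ absorbing into a $\chi_{d_2}$ coming from a linear factor at~$0$, or $\chi_{d_1}\chi_{d_2}$ becoming trivial on a shared irreducible) the relevant degree input to Lemma~\ref{charbound1} or Lemma~\ref{charbound2} never exceeds~$3$. Once this uniformity is confirmed, the remaining work is routine estimation.
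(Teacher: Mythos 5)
Your proposal is correct and follows essentially the same route as the paper: the same expansion of $\mathfrak{N}(e_1,e_2,g)$ via the characteristic functions $\rho_{e_1},\rho_{e_2},\kappa_g$, the same case split on whether $\psi_h$ is trivial (Lemma \ref{charbound2} giving $3q^{m/2}$, Lemma \ref{charbound1} giving $2q^{m/2}$), and the same weighted count $W(e_1)W(e_2)\Omega(g)$ of character triples. The only point the paper makes slightly more explicit is the correction for the up-to-three zeros of $x\,f(x)$ (where the characteristic functions overcount), which your ``bookkeeping'' remark covers and which is absorbed harmlessly into the final bound.
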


\begin{proof} At first we establish the result for $c\neq 0$.
From the definition we have, 
\begin{equation}  \label{N}
\mathfrak{N}(e_1,e_2,g)= \theta(e_1)\theta(e_2)\Theta(g)\underset{\underset{d_2|e_2}{d_1|e_1}}{\int}\,\underset{h|g}{\int}\,S(\chi_{d_1},\chi_{d_2},\psi_h),
\end{equation}
where $$S(\chi_{d_1},\chi_{d_2},\psi_h)= \underset{\alpha\in\mathbb{F}_{q^m}}{\sum}\chi_{d_1}(\alpha)\chi_{d_2}(f(\alpha))\psi_h(\alpha)$$
 and $f(x)=ax^2+bx+c$, which has no repeated roots.
 
Now, if $(\chi_{d_1},\chi_{d_2},\psi_h)\neq (\chi_0,\chi_0,\psi_0)$, then we consider the following cases.

 If $\psi_h\neq \psi_0$, then applying  Lemma \ref{charbound2}, we have $\left|S(\chi_{d_1},\chi_{d_2},\psi_h)\right|\leq 3q^{m/2}$. This result holds even if $ax^2+bx+c$ has three distinct roots in $\mathbb{F}_{q^m}$.

 When $\psi_h=\psi_0$, then applying Lemma 2.2, we have 
 $$\left|S(\chi_{d_1},\chi_{d_2},\psi_h)\right|=\left|S(\chi_{d_1},\chi_{d_2},\psi_0)\right|\leq 2q^{m/2} < 3 q^{m/2}.$$

 Additionally, if $\chi_{d_1}=\chi_{d_2}=\chi_0$, then
  $$\left| S(\chi_{d_1},\chi_{d_2},\psi_h)\right|=\left| S(\chi_{0},\chi_{0},\psi_h)\right|= \left| \underset{\alpha\in\mathbb{F}_{q^m}}{\sum}\psi_g(y\alpha)\right| =0.$$

 Hence, $\left|S(\chi_{d_1},\chi_{d_2},\psi_h)\right| \leq 3 q^{m/2}$, when $(\chi_{d_1},\chi_{d_2},\psi_h)\neq (\chi_{0},\chi_{0},\psi_0) $

Using the  above results, from (\ref{N}) and allowing for up to three zeors of $x(ax^2+bx+c)$ in $\mathbb{F}_{q^m}$,  we obtain the following inequality
\begin{eqnarray*}
\mathfrak{N}(e_1, e_2, g) &\geq &\theta(e_1) \theta(e_2)\Theta(g)\left(q^m-3-3q^{m/2}\underset{d_1,d_2\neq1}{\int}\underset{g\neq1}{\int}1\right) \\
 &=& \theta(e_1) \theta(e_2)\Theta(g) q^{m/2}\left( q^{m/2}-3q^{-m/2}-3\underset{d_1,d_2\neq1}{\int}\underset{g\neq1}{\int}1\right)\\
 &\geq &\theta(e_1) \theta(e_2)\Theta(g)q^{m/2} \left( q^{m/2}-3q^{-m/2}-3\left( W(e_1)W(e_2)\Omega(g)-1\right)\right)\\ 
 & \geq &\theta(e_1) \theta(e_2)\Theta(g)q^{m/2} \left( q^{m/2}-3 W(e_1)W(e_2)\Omega(g)\right).
 \end{eqnarray*}
 This yields (\ref{cond0}).  In particular, setting  $e_1=e_2=q^m-1$ and $g=x^m-1$ we obtain the  sufficient condition (\ref{cond}).

 \end{proof}

 We briefly consider  the case in which $c=0$. Then $f(x)= ax^2+bx= x(ax+b)$, where $a,b\in \mathbb{F}_{q^m}$ with $ab\neq 0$. This time we have
$$\mathfrak{N}(e_1,e_2,g)= \theta(e_1)\theta(e_2)\Theta(g)\underset{\underset{d_2|e_2}{d_1|e_1}}{\int}\,\underset{h|g}{\int}\,S(\chi_{d_1},\chi_{d_2},\psi_h),$$
where $$S(\chi_{d_1},\chi_{d_2},\psi_h)= \underset{\alpha\in\mathbb{F}_{q^m}}{\sum}\chi_{d_1}(\alpha)\chi_{d_2}(\alpha(a\alpha+b))\psi_h(\alpha)=  \underset{\alpha\in\mathbb{F}_{q^m}}{\sum}\chi_{d_3}(\alpha)\chi_{d_2}(a\alpha+b)\psi_h(\alpha).$$
with $\chi_{d_3}=\chi_{d_1}\chi_{d_2}$.  Now, by Lemma \ref{charbound2}.
  $$\left|S(\chi_{d_1},\chi_{d_2},\psi_h)\right|= \left|  \underset{\alpha\in\mathbb{F}_{q^m}}{\sum}\chi_{d_3}(\alpha)\chi_{d_2}(a\alpha+b)\psi_h(\alpha)\right|\leq 2q^{m/2} < 3q^{m/2}$$  and \ref{cond} and (\ref{cond0}) follow as before.

\section{The prime sieve}
For the next stage in our investigation we apply the results on primes dividing $q^m-1$ and irreducible polynolmials dividing $x^m-1$.   This was  introduced by Cohen and Huczynska in \cite{3,4}. Furthermore, Kapetanakis established the following sieving inequality in \cite{8}

\begin{lemma}\label{sieveineq}
{\bf(Sieve Inequality)}
Let $d$ be a divisor of $q^m-1$ and $p_1, p_2,\dots , p_n$ are the remaining distinct primes dividing $q^m-1$. Furthermore, let $g$ be a divisor of $x^m-1$ such that $g_1, g_2, \dots , g_k$ are the remaining distinct irreducible polynomials dividing $x^m-1$. Abbreviate  $ \mathfrak{N}(q^m-1, q^m-1, x^m-1) $ to $\mathfrak{N}$. Then
\begin{equation} \label{ineq}
 \mathfrak{N} \geq \overset{n}{\underset{i=1}{\sum}}\mathfrak{N}(p_i d, d, g)+  \overset{n}{\underset{i=1}{\sum}}\mathfrak{N}( d,p_i d, g)+  \overset{k}{\underset{i=1}{\sum}}\mathfrak{N}(d, d,g_i g)-(2n+k-1)\mathfrak{N}(d, d, g).
\end{equation}
\end{lemma}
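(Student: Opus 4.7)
The plan is to prove the inequality via a straightforward Boole-type union bound applied to the set of ``failures'' of the extra freeness conditions, relative to a base set already satisfying the weaker conditions imposed by $(d,d,g)$. The key combinatorial observation is that since $e$-freeness depends only on the radical of $e$, an element $\alpha$ is $(q^m-1)$-free if and only if $\alpha$ is $p_i d$-free for every $i = 1, \ldots, n$; analogously, $f(\alpha)$ is $(q^m-1)$-free iff $f(\alpha)$ is $p_i d$-free for each $i$, and $\alpha$ is $(x^m-1)$-free iff $\alpha$ is $g_j g$-free for each $j = 1, \ldots, k$. This reduces $\mathfrak{N}$ to the simultaneous satisfaction of $2n+k$ extra conditions inside the base set $S$ of elements counted by $\mathfrak{N}(d,d,g)$.

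I would then set $A_i := \{\alpha \in S : \alpha \text{ is not } p_i d\text{-free}\}$ for $i = 1, \ldots, n$, $B_i := \{\alpha \in S : f(\alpha) \text{ is not } p_i d\text{-free}\}$ for $i = 1, \ldots, n$, and $C_j := \{\alpha \in S : \alpha \text{ is not } g_j g\text{-free}\}$ for $j = 1, \ldots, k$, so that by subadditivity of union,
$$\mathfrak{N} \;=\; \Bigl|S \setminus \Bigl(\bigcup_{i} A_i \cup \bigcup_{i} B_i \cup \bigcup_{j} C_j\Bigr)\Bigr| \;\geq\; |S| - \sum_{i} |A_i| - \sum_{i} |B_i| - \sum_{j} |C_j|.$$
Because $p_i d$-freeness implies $d$-freeness (the moduli differ only by the single prime $p_i \nmid d$), the complement of $A_i$ in $S$ is precisely the set counted by $\mathfrak{N}(p_i d, d, g)$, so $|A_i| = \mathfrak{N}(d,d,g) - \mathfrak{N}(p_i d, d, g)$, and similarly $|B_i| = \mathfrak{N}(d,d,g) - \mathfrak{N}(d, p_i d, g)$ and $|C_j| = \mathfrak{N}(d,d,g) - \mathfrak{N}(d, d, g_j g)$. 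Substituting these and using $|S| = \mathfrak{N}(d,d,g)$, the total coefficient of $\mathfrak{N}(d,d,g)$ collects to $1 - (2n+k) = -(2n+k-1)$, which delivers exactly the inequality (\ref{ineq}).

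There is no serious obstacle in this argument: once the correct base set is identified, the rest is a one-line Boole estimate together with arithmetic bookkeeping. The points requiring care are the radical reduction (justifying that $(q^m-1)$-freeness is equivalent, modulo $d$-freeness, to the conjunction of $p_i d$-freeness conditions, and likewise on the polynomial side) and tracking the sign to recover the coefficient $(2n+k-1)$ precisely. Sharpening beyond Boole would require reinstating pairwise intersection terms, but the weaker subadditive form already suffices for the stated sieve inequality.
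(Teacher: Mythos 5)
Your argument is correct. The paper itself gives no proof of this lemma --- it is quoted from Kapetanakis (reference [8] in the bibliography), where it is established by exactly the union-bound argument you describe: reduce to the base set $S$ counted by $\mathfrak{N}(d,d,g)$ via the radical observation, note that each set $\mathfrak{N}(p_id,d,g)$, $\mathfrak{N}(d,p_id,g)$, $\mathfrak{N}(d,d,g_ig)$ is the complement in $S$ of one failure set, and apply Boole's inequality; the coefficient $-(2n+k-1)$ falls out of the bookkeeping just as you compute. The only point worth tightening is your opening equivalence ``$\alpha$ is $(q^m-1)$-free iff $\alpha$ is $p_id$-free for every $i$'': this needs the $d$-freeness hypothesis when $n=0$ (and in general is an equivalence of the conjunction of all $2n+k$ conditions with the full freeness conditions \emph{given} membership in $S$), but since you carry out the whole estimate inside $S$ this causes no harm.
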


\begin{theorem}
With all the assumptions as Lemma $ \ref{sieveineq}$,  define 
$$ \Delta := 1 - 2  \overset{n}{\underset{i=1}{\sum}}\frac{1}{p_i} - \overset{k}{\underset{i=1}{\sum}} \frac{1}{q^{{\mathrm deg}(g_i)}}$$
 and  
 $$ \Lambda := \frac{2n+k-1}{\Delta}+2.$$
 Suppose  $\Delta>0$. Then a sufficient condition such that a primitive normal element $\alpha$ for which $a\alpha^2+b\alpha+c$ is primitive over $\mathbb{F}_{q^m}$ with $b^2-4ac\neq 0$  is

\begin{equation}\label{cond3}
  q^{m/2}> 3 {W(d)}^2\Omega(g)\Lambda .
  \end{equation} 
\end{theorem}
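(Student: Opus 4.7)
The plan is to feed the sieve inequality (\ref{ineq}) with refined estimates for each of its right-hand side terms, comparing them to the base quantity $\mathfrak{N}(d,d,g)$ by isolating the single ``new'' character that distinguishes them from it. Because each $p_i$ is coprime to $d$ and each $g_i$ coprime to $g$, the arithmetic factors split multiplicatively ($\theta(p_id)=\theta(p_i)\theta(d)$, $W(p_id)=2W(d)$, and likewise for $\Theta$, $\Omega$), which keeps the bookkeeping clean.

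First I would prove a decomposition $\mathfrak{N}(p_id, d, g) = \theta(p_i)\mathfrak{N}(d,d,g) + R_i$, obtained by expanding the defining character-sum integral used in the proof of Theorem 3.1 and splitting the $d_1$-sum into the parts with $d_1\mid d$ and with $p_i\mid d_1$. In $R_i$ the multiplicative character $\chi_{d_1}$ is always of order divisible by $p_i$, hence nontrivial, so each $|S|\leq 3q^{m/2}$ by the Weil-type bounds of Lemmas \ref{charbound1} and \ref{charbound2} (exactly as in Theorem 3.1). The key point is that the $\phi(p_i)$ characters of order $p_i$ are divided by $\phi(p_i)$ in the $\mu/\phi$ weight, so the extra prime contributes a factor of $1$ rather than $2$ to the $W$-count, giving $|R_i|\leq 3q^{m/2}\,\theta(p_i)\theta(d)^2\Theta(g)W(d)^2\Omega(g)$. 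Analogous identities of the same shape would be derived for $\mathfrak{N}(d,p_id,g)$ (isolating a nontrivial $\chi_{d_2}$) and for $\mathfrak{N}(d,d,g_ig)=\Theta(g_i)\mathfrak{N}(d,d,g)+R_i'$ (isolating a nontrivial $\psi_h$).

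Substituting these three identities into (\ref{ineq}), the coefficient of $\mathfrak{N}(d,d,g)$ collapses telescopically to $2\sum\theta(p_i)+\sum\Theta(g_i)-(2n+k-1)=\Delta$, while the accumulated error is bounded by $3q^{m/2}\,\theta(d)^2\Theta(g)W(d)^2\Omega(g)\cdot(2n+k-1+\Delta)$ using the identity $2\sum\theta(p_i)+\sum\Theta(g_i)=2n+k-1+\Delta$. Inserting the lower bound of Theorem 3.1 for $\mathfrak{N}(d,d,g)$ and factoring out $\theta(d)^2\Theta(g)q^{m/2}$ gives
\[
\mathfrak{N} \;\geq\; \theta(d)^2\Theta(g)q^{m/2}\bigl[\Delta\,q^{m/2} - 3W(d)^2\Omega(g)(2\Delta + 2n+k-1)\bigr],
\]
and dividing the bracket through by $\Delta>0$ and recognising $\Lambda=(2n+k-1)/\Delta + 2$ immediately yields (\ref{cond3}).

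The main obstacle I anticipate is the careful verification that the decomposition-plus-Weil recipe really produces the clean bound above for every triple of characters arising in $R_i$, $R_i'$ and their analogues, including the degenerate subcases where one or both of the other two characters is trivial. These subcases are routinely controlled via $\sum_\alpha\chi(\alpha)=0$, but one must track which character is nontrivial in each of the three kinds of perturbation so that no loose factor of $2$ from the extra prime or irreducible polynomial survives. Getting the final coefficient to be exactly $\Delta\Lambda$ rather than something larger is what makes the sufficient condition (\ref{cond3}) tight.
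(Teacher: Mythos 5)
Your proposal is correct and follows essentially the same route as the paper: the decomposition $\mathfrak{N}(p_id,d,g)=\theta(p_i)\mathfrak{N}(d,d,g)+R_i$ is exactly the paper's regrouping of the sieve inequality into difference terms plus $\Delta\,\mathfrak{N}(d,d,g)$, the error terms are bounded by isolating the necessarily nontrivial character and applying the Weil-type bounds just as in Theorem 3.1, and the final algebra recovering $\Lambda$ is identical. No substantive difference from the paper's argument.
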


\begin{proof}   A key step is to write (\ref{ineq}) in the equivalent form
\begin{multline}\label{altineq}
\N  \geq\sum_{i=1}^n\left( \N(p_id,d,g)-\left(1-\frac{1}{p_i}\right) \N(d,d,g) \right)+  \sum_{i=1}^n\left(\N(d,dp_i,g)-\left(1-\frac{1}{p_i}\right)\N(d,d,g)\right)\\
+\sum_{i=1}^k \left(\N(d,d,g_ig) -\left(1-\frac{1}{q^{{\mathrm deg}(g_i)}}\right)\N(d,d,g)\right) \quad  + \quad \Delta\N(d,d,g).
\end{multline}

On the right side of (\ref{altineq}), since $\Delta >0$,  we can bound the last  term below using (\ref{cond0}).   Thus
\begin{equation} \label{Cond}
 \Delta\N(d,d,g) \geq \Delta \theta^2(d)\Theta(g)q^{\frac{m}{2}}(q^{\frac{m}{2}} -3W^2(d)\Omega(g)).
 \end{equation}

Moreover, since $ \theta(p_id) =\theta(p_i)\theta(d) =\left(1-\frac{1}{p_i}\right)$,  we have from (\ref{N}),
$$ \N(p_id,d,g)-\left(1-\frac{1}{p_i}\right) \N(d,d,g) =\left(1-\frac{1}{p_i}\right)\theta^2\int_{\substack{d_1|d\\ d_2|d}}\int_{h|g}S(\chi_{p_id_1}, \chi_{d_2}, \psi_h).$$
Hence, as for (\ref{cond0}),
\begin{eqnarray}\label{diff1}
\left| \N(p_id,d,g)-\left(1-\frac{1}{p_i}\right) \N(d,d,g)\right| & \leq & 3\left(1-\frac{1}{p_i}\right)\theta^2(d)\Theta(g)\big{(}W(p_id)-W(p_i)\big{)}W(d) \nonumber\\
&=& 3\left(1-\frac{1}{p_i}\right)\theta^2(d)W^2(d) .
\end{eqnarray}
Similarly,
\begin{equation}\label{diff2}
\left| \N(d,d,g)-\left(1-\frac{1}{p_i}\right) \N(d,p_id,g)\right| \leq 3\left(1-\frac{1}{p_i}\right)\theta^2(d)\Theta(g)W^2(d)
\end{equation}
and
\begin{equation} \label{diff3}
 \left|\N(d,d,g_ig) -\left(1-\frac{1}{q^{{\mathrm deg}g_i}}\right)\N(d,d,g)\right| \leq 3 \theta^2(d) \left(1- \frac{1}{q^{\mathrm{deg}(g_i)}} \right)\Theta(g)\Omega(g).
\end{equation}

Inserting (\ref{Cond}), (\ref{diff1}), (\ref{diff2}) and (\ref{diff3}) in (\ref{altineq}) and cancelling   the common factor $\theta^2(d)\Theta(g)$, we obtain (\ref{cond3}) as a condition for $\N$ to be positive (since       $\Delta$ is positive).

 \end{proof}

\section{Existence results for fields of characteristic 3}
One could endeavour to analyse the consequences of the conditions (\ref{cond}) and (\ref{cond3}) to arbitrary pairs $(q,m)$ by extending and developing the techniques employed in \cite{5}, \cite{6} and \cite{7} but this would be a testing exercise.   Accordingly, we simply illustrate what might be possible by  drawing  on specific items in these works to deal with  finite fields of characteristic 3.   Hence, from now on we suppose $q=3^r$, where $r$ is a positive integer.   
 
 First we settle the case when $m=2$.
 \begin{lemma}\label{m2}
Let $q=3^r$.  Given any quadratic polynomial $f(x)= ax^2+bx+c \in \F[x]$, with $ a \neq 0, b^2 \neq ac$,  there exists a primitive normal element $\alpha \in \mathbb{F}_{q^2}$ such that $f(\alpha)$ is also primitive.
 \end{lemma}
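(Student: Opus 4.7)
The plan is to specialise Theorem 3.2 to $m = 2$ and $q = 3^r$. In characteristic $3$, $\gcd(2,3)=1$, so $x^2 - 1 = (x-1)(x+1)$ splits into two distinct linear factors and $\Omega(x^2-1) = 4$. A useful shortcut for $m = 2$ is that any primitive $\alpha \in \mathbb{F}_{q^2}$ is automatically free over $\mathbb{F}_q$: if $\alpha$ were primitive but not normal, then a short computation (writing out linear dependence between $\alpha$ and $\alpha^q$) forces $\alpha^{(q-1)^2} = 1$, so $(q-1)(q+1) = q^2-1$ would divide $(q-1)^2$, i.e., $(q+1)\mid(q-1)$, which is impossible. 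Hence one only has to exhibit a primitive $\alpha$ with $f(\alpha)$ primitive, and the $g$-free condition in $\mathfrak N$ is redundant if one wishes to drop it.

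The first step is to invoke the bare condition \eqref{cond}, which here reads $q > 12\,W(q^2-1)^2$. Using a standard asymptotic estimate of the form $W(n) \leq c_s\,n^{1/s}$ (with an explicit constant for any fixed $s$, valid for $n$ large), one obtains an explicit threshold $R$ such that this inequality is satisfied for all $q = 3^r$ with $r \geq R$.

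For the (finitely many) remaining values $r < R$, I would factor $q^2 - 1$ by hand and apply the sieve from Theorem 3.2 with $g = x^2 - 1$ (so $k = 0$), taking $d$ to be the radical of $q^2-1$ with its largest prime divisors moved into the sieved set $\{p_1,\dots,p_n\}$. The trade-off is to shrink $W(d) = W(q^2-1)/2^n$ while keeping $\Delta = 1 - 2\sum_{i=1}^n 1/p_i$ positive and $\Lambda = (2n-1)/\Delta + 2$ modest, and then verify \eqref{cond3}, i.e.\ $q > 12\,W(d)^2\,\Lambda$. Most of the intermediate exponents $r$ in this window should be handled by some such sieve choice, perhaps helped by allowing one or both factors of $x^2 - 1$ to be sieved as well.

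The main obstacle is the smallest exponents, realistically $r \in \{1,2,3\}$, where $q$ is tiny and $q^2 - 1$ has too few prime factors for any sieve choice to beat the bound. For these I would fall back on a direct verification inside $\mathbb{F}_{q^2}$: using the observation above, the primitive elements of $\mathbb{F}_{q^2}$ are all automatically normal, so one lists the $\phi(q^2-1)$ primitive elements, and for each admissible triple $(a,b,c)\in\mathbb{F}_q^3$ with $a\neq 0$ and $b^2\neq ac$, exhibits at least one primitive $\alpha$ with $f(\alpha)$ primitive. The number of triples is cut down substantially by the obvious symmetries (scaling $f$ by $\mathbb{F}_q^*$, translating the variable of $f$, and the Galois involution $\alpha\mapsto\alpha^q$), leaving a short finite case-check per small $q$.
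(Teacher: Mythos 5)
Your opening observation (primitive in $\mathbb{F}_{q^2}$ implies normal over $\mathbb{F}_q$, so the $g$-free condition can be dropped) is exactly the paper's first step, and your justification of it is correct. After that, however, you take a genuinely different route. The paper does not return to the character-sum machinery at all for $m=2$: it invokes Theorem \ref{T1} (the Booker--Cohen--Sutherland--Trudgian result of \cite{1}), which already settles the existence of a primitive $\alpha$ with $f(\alpha)$ primitive for quadratics over $\mathbb{F}_{q^2}$ for every $q=3^r$ except $q=3$, and then disposes of $\mathbb{F}_9$ by hand using the fact that it has only four primitive elements and that symmetry reduces the quadratics to four representatives. Your plan instead re-derives everything from conditions (\ref{cond}) and (\ref{cond3}); this buys independence from \cite{1}, at the price of factorising $3^{2r}-1$ for every $r$ below the asymptotic threshold (with $W(n)<11.25\,n^{1/5}$ that threshold is around $r\approx 34$, so some thirty factorisations) and a direct search in the smallest fields.

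The one concrete gap is that you underestimate where the sieve stops working. For $q=81$ one has $q^2-1=2^5\cdot 5\cdot 41$, and the best admissible choice in (\ref{cond3}) (even after discarding normality, i.e.\ $\Omega(g)=1$) is $d=2$ with $\{5,41\}$ sieved, giving $\Delta=1-2(1/5+1/41)\approx 0.551$, $\Lambda\approx 7.44$ and $3W(d)^2\Lambda\approx 89.3>81$; every other choice is worse or has $\Delta<0$. The cases $q=9$ and $q=27$ fail similarly. So the direct-verification window is $r\in\{1,2,3,4\}$, not $\{1,2,3\}$, and for $q=81$ that means searching on the order of $80\cdot 81^2$ admissible quadratics against the $2560$ primitive elements of $\mathbb{F}_{6561}$ -- a genuine computer computation rather than a short case-check, even after quotienting by the symmetries you list. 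The argument still closes in principle, but as written the final step is substantially heavier than you acknowledge, which is precisely the work the paper's appeal to \cite{1} is designed to avoid.
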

 \begin{proof}
 As noted in \cite{5}, when $m=2$, a primitive element of $\Fm$ is automatically normal over $\F$.  Hence  it suffices to show that there is a primitive element of $\alpha \in \mathbb{F}_{q^2}$  such that $f (\alpha)$ is also primitive for any quadratic $f(x) \in \F[x]$ (with $b^2 \neq ac$).  By \cite{1} Theorem 1, this holds (even allowing $f(x)$ to be a quadratic in $\mathbb{F}_{q^2}[x]$) except when $q=3$.  
 
 So suppose $(q,m)=(3,2)$. and let $\beta \in \mathbb{F}_9$ satisfy $\beta^2+\beta+1$.   Then $\beta$ is a primitive element of $ \mathbb{F}_9$.   Moreover, $\{\pm \beta, \pm(\beta-1)\}$ comprises the set of primitive elements of $ \mathbb{F}_9$.  Hence we need only consider one quadratic $f$ from each set of the form $\pm f(\pm x)$.  Specifically, (remembering $ b^2 \neq ac$)   it suffices to consider only the quadratics $f$ in the set $\{ x^2 \pm 1, x^2+x,x^2+x-1\}$.

Now, if $f(x) = x^2-1$ then $(\beta, f(\beta)=\beta)$ is a primitive pair.
 
 If $f(x) = x^2+1$, then $(\beta-1, f(\beta-1)=-\beta)$ is a primitive pair.
 
 If $f(x) = x^2+x$, then $(\beta, f(\beta) =1-\beta)$  is a primitive pair.
 
 Finally, if $f(x) = x^2+x-1$, then$\beta, f(\beta)=-\beta)$ is a primitive pair.
\end{proof}

From now on assume $m \geq 3$.  Also assume that $m= 3^j m^\prime$, where $j$ is a non-negative integer and $\mathrm {gcd}(3,m^\prime)=1$. In fact, when $m'=1$ or $2$ we can assume $j$ is positive.
  For further computation, we need some additional results.   
 
   Furthermore, we consider the two cases 
 \begin{itemize}
 \item $m^\prime|q-1$
 \item $m^\prime \nmid q-1$
 \end{itemize}
 \textbf{Case A: $m^\prime| q-1$}
 \vspace{ .3cm}

 The following result is inspired from the \textbf{Lemma 6.1}, given by Cohen in \cite{11}.

\begin{lemma}\label{Lambdaeq}
 For $q=3^r$, where $r\geq 2$, let $d=q^m-1$ and let $g|x^m-1$ with $g_1, g_2, \ldots, g_k$ be the remaining distinct  irreducible polynomials dividing $x^m-1$
 Furthermore, let us write $ \Delta := 1 - \overset{k}{\underset{i=1}{\sum}} \frac{1}{q^{deg(g_i)}}$ and  $ \Lambda := \frac{k-1}{\Delta}+2$, with $\Delta>0$. 
 Let $m= m^\prime\, 3^j$, where $j$ is a non-negative integer and $\mathrm{gcd}(m^\prime,3)=1$. If $m^\prime| q-1$, then $$\Lambda = \frac{q^2-3q+aq+2}{aq-q+1} $$ 
 where $m^\prime= \frac{q-1}{a}$.  In particular, $\Lambda<q^2$.
\end{lemma}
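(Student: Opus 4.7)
The plan is to first identify the irreducible factors of $x^m-1$ over $\F$ under the standing hypotheses. The choice $g=1$ (so that $g_1,\dots,g_k$ enumerate all distinct monic irreducible factors of $x^m-1$) is what the claimed closed form for $\Lambda$ presupposes. Because $q=3^r$ and $m=3^j m'$ with $\gcd(m',3)=1$, the Frobenius identity in characteristic $3$ gives $x^m-1=(x^{m'}-1)^{3^j}$, so the distinct irreducible factors of $x^m-1$ coincide with those of $x^{m'}-1$. The assumption $m'\mid q-1$ then implies all $m'$-th roots of unity lie in $\F^*$, so $x^{m'}-1$ splits completely into $m'$ distinct linear factors over $\F$. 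This yields $k=m'$ and $\deg g_i=1$ for every $i$.

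Feeding these values into the definitions gives $\Delta=1-m'/q=(q-m')/q$ and hence
$$
\Lambda=\frac{(m'-1)q}{q-m'}+2.
$$
Substituting $m'=(q-1)/a$ produces $q-m'=(aq-q+1)/a$ and $(m'-1)q=(q-1-a)q/a$. Placing the two terms of $\Lambda$ over the common denominator $aq-q+1$ and expanding the numerator,
$$
(q-1-a)q+2(aq-q+1)=q^2-3q+aq+2,
$$
which is the claimed closed form.

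For the bound $\Lambda<q^2$, note that $m'\ge 1$ forces $a\le q-1$, while $a\ge 1$ makes $aq-q+1\ge 1>0$, so it suffices to verify
$$
q^2-3q+aq+2<q^2(aq-q+1).
$$
Expanding and cancelling reduces this to $aq+2<q^3(a-1)+3q$. For $a=1$ this is $q+2<3q$, automatic since $q\ge 3$. For $a\ge 2$, the bound $a\le q-1$ gives $aq+2\le q^2-q+2<q^3\le q^3(a-1)+3q$, which closes the case.

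The computation is entirely elementary; the only point requiring a small moment of care is recognizing that the formula in the lemma presupposes $g=1$ (the ``full sieve'' over all irreducible factors of $x^m-1$), after which both assertions reduce to straightforward manipulation.
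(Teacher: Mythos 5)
Your proof is correct: the paper itself omits any argument for this lemma (citing only that it is ``inspired by'' Cohen's Lemma 6.1), and your verification — using $x^m-1=(x^{m'}-1)^{3^j}$ in characteristic $3$, the complete splitting of $x^{m'}-1$ into $m'$ distinct linear factors when $m'\mid q-1$, hence $k=m'$, $\Delta=(q-m')/q$, and then the substitution $m'=(q-1)/a$ — is exactly the computation the statement requires, including the correct reading that the closed form presupposes $g=1$. The final inequality $\Lambda<q^2$ is also handled cleanly via $1\le a\le q-1$.
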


 In order to apply our results, we also need the following lemma which can also be developed from \textbf{Lemma 6.2} by Cohen in \cite{11}. 

\begin{lemma}\label{Wbound}
For any positive integer $n$, $W(n)< 11.25\, n^{1/5}$, where $W$ has same meaning as stated earlier. 
\end{lemma}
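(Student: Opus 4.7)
The plan is to reduce the problem, via standard optimisation, to checking a single numerical inequality, namely that the ratio $W(n)/n^{1/5}$ is bounded by $11.25$ at one specific extremal $n$.

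First, since $W(n)=2^{\omega(n)}$ depends only on the number of distinct prime divisors of $n$ while $n^{1/5}$ is strictly increasing in $n$, for any integer $n$ with $\omega(n)=k$ the ratio $W(n)/n^{1/5}$ is at most $W(n_0)/n_0^{1/5}$, where $n_0=p_1p_2\cdots p_k$ is the product of the first $k$ primes. Hence it suffices to bound the sequence
\[
h(k):=\frac{2^k}{(p_1p_2\cdots p_k)^{1/5}}.
\]

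Second, I would locate the maximum of $h(k)$ by inspecting successive ratios. One has
\[
\frac{h(k+1)}{h(k)}=\frac{2}{p_{k+1}^{1/5}},
\]
which exceeds $1$ precisely when $p_{k+1}<2^5=32$, i.e.\ when $p_{k+1}\le 31$. Since the primes up to $31$ are $2,3,5,7,11,13,17,19,23,29,31$ (eleven of them) and the next prime is $p_{12}=37>32$, the sequence $h(k)$ is strictly increasing for $k=1,\ldots,11$ and strictly decreasing for $k\ge 11$. Hence $h$ attains its maximum at $k=11$, with
\[
N_{0}:=2\cdot3\cdot5\cdot7\cdot11\cdot13\cdot17\cdot19\cdot23\cdot29\cdot31=200560490130.
\]

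Finally, I would verify the inequality $h(11)=2048/N_0^{1/5}<11.25$ by the equivalent estimate $N_0>(2048/11.25)^{5}$. A direct check $(182.1)^5<N_0$ (expanding $182.1^2=33160.41$ and squaring, then multiplying by $182.1$) suffices; since this gives $N_0^{1/5}>182.1>2048/11.25$, the inequality $W(n)<11.25\,n^{1/5}$ follows for every positive integer $n$. The only real obstacle is this closing numerical verification, which is wholly elementary but must be carried out with enough precision to beat the near‑tight constant $11.25$.
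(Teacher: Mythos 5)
Your proof is correct and complete: the reduction to primorials, the ratio test showing the maximum of $2^{k}/(p_1\cdots p_k)^{1/5}$ occurs at $k=11$ (since $p_{11}=31<32<37=p_{12}$), and the closing numerical check $182.1^5<200560490130$ all hold, giving $2048/N_0^{1/5}<11.25$. The paper itself omits the proof, deferring to Lemma~6.2 of Cohen's cited work, and your argument is exactly the standard one underlying that reference, so there is nothing to add.
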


Then taking $g=1$ in  inequality  (\ref{cond3})  and applying Lemma  \ref{Wbound}, we have the sufficient condition
$$ q^{\frac{m}{10}}>379.688\, q^2$$
Then for $m^\prime=q-1$, the inequality transforms to
 $$ q^{\frac{q-1}{10}-2}>379.688,$$
 which holds for $q\geq 81$.
\
Next, we consider $q=27$ and $m= m^\prime=q-1=26$. Then, by factorising,  $\omega(27^{26}-1)=12$ and  the pair $(q,m)=(27,26)$ satisfies the condition (\ref{cond3}).  Hence $\mathbb{F}_{27^{26}}$ contains a primitive normal element $\alpha$ such that $f(\alpha)$ is also primitive with given conditions.

 In order to reduce our  calculations, we now consider the range  $23\leq m^\prime < \frac{q-1}{2}$, for $q\geq 81$. Then, by Lemma {\ref{Lambdaeq} we have $\Lambda<\frac{q}{2}$. Hence the inequality (\ref{cond3}) is satisfied if $q^{\frac{m^\prime-1}{10}-1}> 189.844$ and this holds for $m^\prime\geq 23$. 
  
  When $m^\prime=\frac{q-1}{2}$, then $\Lambda\leq q$ and then the inequality is $q^{\frac{m^\prime-1}{10}-1}> 379.688$ and this holds for $m^\prime\geq 25$. Since $m^\prime=23 \neq \frac{q-1}{2}$ for any $q=3^r$ hence we leave this case.

 Next,  we  investigate all   cases with $m^\prime<23$. In the next part, we set $g=1$ unless mentioned otherwise.
\begin{itemize}
\item\textbf{Case 1: $m^\prime=1$}

 Then $m= 3^j$, $j$ is a positive integer.
To check the condition we take $g=1$. In that case $\Delta=\frac{2}{3}$ and $\Lambda=2$.
Then the inequality becomes $$q^{\frac{3^j}{10}}> 759.375.$$

Taking $q=3$, we have that the condition holds for  $j \geq 4$.  If it does not hold, then $q^m\leq 3^{61}$. So we calculate the rest of the pairs $(q,m)$ by calculating $\omega=\omega(q^m-1)$ i.e., the number of distinct prime divisors of $q^m-1$. Hence it suffices to check that $q^{m/2}> 3 W(q^m-1)^2$, where $W(q^m-1)=2^{\omega}$.

 After calculation we have that the following pairs are possible exceptional pairs. 

\begin{center}
\begin{tabular}{|c|}
\hline 
$(3,3),\quad (3,9),\quad (3,27), \quad (9,3),\quad(27,3),  \quad (81,3)$ \\ 
\hline 
\end{tabular} 
\end{center}

\item\textbf{Case 2: $m^\prime=2$}

 In this case, $m$ is of the form $m=2.3^j$, where $j$ is a positive integer.
Then $x^{m^\prime}-1= x^2-1$ splits into two distinct linear polynomials. We take  $g=1$ and then calculate the following.
  For $q=3$, $\Lambda\leq 5$ and the sufficient condition is $ q^{\frac{2.3^j}{10}}> 1898.44$ , which holds for $j\geq 4$.
 For $q=9$ and $q=27$, $\Lambda< 3.3$, and the condition is $ q^{\frac{2.3^j}{10}}> 1253$, which holds for $j\geq 3$. 
  Again for $3^4\leq q \leq 3^{10}$, $\Lambda<3.026$ and we need to check
 $ q^{\frac{2.3^j}{10}}> 1148.93$, which holds when $j\geq 2$. 
  For $3^{11}\leq q \leq 3^{32}$, $\Lambda< 3.0001$ and the condition is $ q^{\frac{2.3^j}{10}}> 1139.07$, which holds for $j\geq 1$; and for $q\geq 3^{33}$ the condition holds for $j\geq 0$.

   We calculate the remaining pairs by using $W(q^m-1)$ and $\Omega(x^2-1)$.  Then the condition is $q^{m/2}> 3 W(q^m-1)^2\Omega(x^2-1)\Lambda$. We obtain following pairs as possible exceptional pairs.

\begin{center}
\begin{tabular}{|c|}
\hline 
$ (3,6),\quad (3,18), \quad (9,6),\quad (27,6), \quad (81,6)$ \\
\hline 
\end{tabular} 
\end{center}

 From now on take $m=m'3^j$ with $j \geq0$.
\item\textbf{Case 3: $m^\prime=4$} 

 Here $m= 4.3^j$, with non-negative integer $j$. As there are $4$ distinct factors of $x^{m^\prime}-1$, so by calculation we have $\Delta>0$ if $q>3$. Then $\Lambda\leq 7.4$ for $q= 9$ and the sufficient condition is 
$q^{\frac{4.3^j}{10}}> 2809.69 $ which holds for $j\geq 3$.

 For $27\leq q \leq 243$, $\Lambda\leq 5.16$, and sufficient condition is $q^{\frac{4.3^j}{10}}> 1959.19 $. This holds when $j\geq 2$. Again, $729\leq q \leq 3^{17}$ the condition is $q^{\frac{4.3^j}{10}}> 1904.89 $ and holds for $j\geq 1$. When $q\geq 3^{18}$, the condition holds for $j\geq 0$.

 Taking $g=1$, we check the remaining pairs for the inequality $q^{m/2}>3.2^{2\omega}\Lambda$ and have the following pair as possible exceptional pair which does not  satisfy the inequality. 
\begin{center}
\begin{tabular}{|c|}
\hline 
$(9,4) $\\
\hline 
\end{tabular} 
\end{center}

  Similarly, proceeding in similar manner, we have the following pair as the solitary  possible exceptional pair from the remaining cases   in which  $m'\leq22$.
\begin{center}
\begin{tabular}{|c|}
\hline 
$(9,8) $\\
\hline 
\end{tabular} 
\end{center}

\end{itemize}

For each of the individual  pairs $(q,m)$ listed above that do not satisfy the sufficient condition based on Lemma \ref{Wbound}, we can test them more precisely by means of the sufficient condition (\ref{cond3}) after factorising completely $x^m-1$ and $q^m-1$ and making a choice of polynomial divisor $g$ of $x^m-1$ and factor $d$ of $q^m-1$.  In practice, the best choice is to choose $p_1, \ldots, p_n$ and sometimes, the ``largest'' irreducible factors $g_1, \ldots, g_k$ of $x^m-1$ to ensure that $\Delta$ is positive (and not too small).   Here the multiplicative aspect of the sieve is more significant.   Table 1 summarises the pairs in which the test yielded a positive conclusion: in only one case was $g \neq x^m-1$.

\vspace{.5cm}

\begin{center}
\begin{tabular}{|c|c|c|c|c|c|c|c|}
\hline 
$(q,m)$ & $d$ & $n$ & $g$ & $k$ & $\Lambda$ &$q^{m/2}$ & $3 W(d)^2\Omega(g)\Lambda$ \\ 
\hline 
(3,18) & 14 & 4 & $x^{18}-1$ & 0 & 12.231 & 19683 & 2348.65 \\ 
\hline 
(3,27) & 26 & 4 & $x^{27}-1$ & 0 & 9.18577 & $2.76145\times 10^6$& 881.834 \\  
\hline 
(9,5) & 2 & 2 & $x-1$ & 0 & 7.0939 & 243 & 85.1275 \\ 
\hline 
(9,7) & 1094 & 1 & $x^{7}-1$ & 0 & 3.01 & 2187  & 577.92 \\ 
\hline 
(9,8) & 10 & 3 & $x^2+1$ & 4 & 19.1006 & 6561 &   916.803 \\ 
\hline 
(9,9) & 14 & 4 & $x^9-1$ & 0 & 12.231 &  19683  &782.784 \\ 
\hline 
(27,5) & 22 & 2 & $x^{5}-1$ & 0 & 5.54729 & 3788  & 1065.08 \\ 
\hline 
(27,8) & 10 & 5 & $x^{8}-1$ & 0 & 20.5968 & 531441  & 31636.7 \\ 
\hline

\end{tabular}

\end{center}

\begin{center}
Table 1
\end{center}

 From the above table and calculation, we conclude that the following pairs are the final possible exceptional pairs, where $m^\prime|q-1$ and $(q,m)$ does not  satisfy the sufficient condition.
\begin{center}
\begin{tabular}{|c|}
\hline 
$((3,3), \quad3, 6) \quad (3,9)\quad  (9,3)\quad (9,4))$\\
$(9,6),\quad (27,3),\quad (27,4),\quad(81,3)$\\ 
\hline 
\end{tabular} 
\end{center}

\bigskip
 \textbf{Case B: $m^\prime\nmid q-1$}

 Let $u$ be the order of $q$ mod $m^\prime$. Then $x^{m^\prime}-1$ is a product of irreducible polynomial factors of degree less than or equal to $u$ in $\mathbb{F}_q[x]$.
In particular, $u\geq 2$ if $m^\prime \nmid q-1$.

 Let $M$ be the number of distinct  irreducible polynomials of $x^m-1$ over $\mathbb{F}_q$ of degree less than $u$. 
 
  Let $\vartheta(q,m)$ denotes the ratio
 $$ \vartheta(q,m):= \frac{M}{m},$$

   where $$m\vartheta(q,m)= m^\prime\vartheta(q,m^\prime).$$ 

 From \textbf{Proposition 5.3} in \cite{3}, we deduce the following bounds.

\begin{lemma}\label{rhobound}
Suppose $q=3^r$. Then the  following hold.
\begin{itemize}
\item $\vartheta(q,m)\leq \frac{1}{2}$, for $m=2 \,\mathrm{gcd}(q-1, m^\prime)$,
\item $\vartheta(q,m)\leq \frac{3}{8}$, for $m= 4\, \mathrm {gcd}(q-1, m^\prime)$,
\item $\vartheta(q,m)\leq \frac{1}{3}$, otherwise. 

\end{itemize}
\end{lemma}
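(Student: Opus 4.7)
The first step is to eliminate the $3$-part of $m$: since $q=3^r$, Frobenius gives the identity $x^m-1=(x^{m'}-1)^{3^j}$ in $\mathbb{F}_q[x]$, so the set of distinct irreducible factors of $x^m-1$ over $\mathbb{F}_q$ equals that of $x^{m'}-1$. Consequently $M$ depends only on $m'$ (and $q$), which immediately gives the stated identity $m\vartheta(q,m)=m'\vartheta(q,m')$ and reduces the task to bounding $\vartheta(q,m')=M/m'$; the desired bound for $\vartheta(q,m)$ then follows via $\vartheta(q,m)=\vartheta(q,m')/3^j$.

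Next, I would express $M$ via an orbit decomposition. The irreducible factors of $x^{m'}-1$ over $\mathbb{F}_q$ are in bijection with the orbits of the Frobenius $\zeta\mapsto\zeta^q$ on the group $\mu_{m'}\cong\mathbb{Z}/m'\mathbb{Z}$. For each divisor $e$ of $m'$, the $\phi(e)$ elements of order $e$ split into $\phi(e)/u_e$ orbits of length $u_e=\mathrm{ord}_e(q)$, and since every $u_e$ divides $u=u_{m'}$, the condition ``degree $<u$'' becomes ``$u_e$ is a proper divisor of $u$''. Thus
\[
M=\sum_{\substack{e\mid m'\\ u_e<u}}\frac{\phi(e)}{u_e}.
\]
Writing $t=\gcd(q-1,m')$, the divisors contributing through $u_e=1$ are exactly the divisors of $t$, and they contribute $\sum_{e\mid t}\phi(e)=t$ to $M$.

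Finally, I would perform the three-way case analysis. In Case 1, the condition $m=2t$ forces $j=0$ and $m'=2t$; a short computation modulo $2t$ (using that $q$ is odd and $q\equiv 1\pmod t$) shows $u=2$, so only the size-$1$ orbits contribute to $M$. Hence $M=t$ and $\vartheta(q,m)=t/(2t)=1/2$. In Case 2 the condition $m=4t$ similarly forces $j=0$ and $m'=4t$, and one checks that $u\in\{2,4\}$; the contributions to $M$ come from orbits of length $1$ (total $t$) and, if $u=4$, possibly length $2$ (contributing at most $(m'-t)/2$ divided by the orbit length), and summing the extremes yields $M\le 3m'/8$. In the remaining Case~3, one uses the fact that any $e\mid m'$ with $u_e<u$ satisfies $e\mid q^{u/p}-1$ for some prime $p\mid u$, so the total length of size-$<u$ orbits is at most $\sum_{p\mid u}\gcd(q^{u/p}-1,m')$; a structural argument ruling out Cases~1--2 forces this sum to be at most $m'/3$ after dividing by the minimal orbit length.

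The main obstacle is the final case, because the bound $1/3$ is tight only after one verifies, through the structural hypothesis ``neither $m=2t$ nor $m=4t$'', that one cannot have too many divisors of $m'$ concentrated in $q^{u/p}-1$ for small $p$. This is essentially the content of Proposition~5.3 in \cite{3}; the work here is to re-read that proof in characteristic~$3$, exploiting that $3\nmid m'$ and $3\nmid q-1$ to discard the irrelevant sub-cases, and to check that no additional configurations appear when $m$ carries an extra factor $3^j$ (which is handled transparently by Step~1).
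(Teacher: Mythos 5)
The paper does not actually prove this lemma: it is stated as a direct specialization of Proposition~5.3 of Cohen--Huczynska \cite{3} to $q=3^r$, with no argument given. Your plan is therefore more detailed than the source it is being compared against, and its skeleton is sound: the reduction $x^m-1=(x^{m'}-1)^{3^j}$ and the resulting identity $\vartheta(q,m)=\vartheta(q,m')/3^j$ are exactly the right first move (and correctly force $j=0$ in the first two cases, since $3\nmid m'$); the orbit count $M=\sum_{e\mid m',\,u_e<u}\phi(e)/u_e$ with the length-one orbits contributing $\sum_{e\mid t}\phi(e)=t$ is correct; and your Case~1 computation ($u=2$, $M=t$, $\vartheta=1/2$) is complete. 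Two points deserve attention. First, in Case~2 your stated bound on the length-two orbits (``at most $(m'-t)/2$ divided by the orbit length'') is too generous and does not yield $3/8$; the step you need is that the elements lying in orbits of length dividing $2$ are the $\gcd(q^2-1,m')$-th roots of unity, and since $u=4$ this gcd is a proper divisor of $4t$ that is a multiple of $t$, hence at most $2t$ -- giving at most $t/2$ quadratic factors and $M\le 3t/2=3m'/8$. Second, your Case~3 is not an argument but an appeal to the same Proposition~5.3 that the paper cites; that is no worse than what the paper does, but it means the only genuinely hard part of the lemma remains outsourced in both treatments.
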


 Now, to discuss the conditions, we need the following lemma, which is inspired by  \textbf{Lemma 7.2} in \cite{11}. We use almost the same procedure as in \cite{11}: hence the  proof is omitted.

\begin{lemma}
Assume that $q=3^r$ and $m$ is a positive integer such that $m^\prime\nmid q-1$.  Let $u (>1)$ denote the order of $q \mod m'$.   Let $g$ be the product of the irreducible factors of $x^{m'}-1$ of degree less than $u$.  Then, in the notation of Lemma  $\ref{Lambdaeq}$, we have $\Lambda\leq m^\prime$. 
\end{lemma}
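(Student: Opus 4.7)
The plan is to extract $\Lambda \leq m'$ from two elementary facts about the factorisation of $x^{m'}-1$ over $\mathbb{F}_q$. The first is that in characteristic $3$, since $m = 3^j m'$ with $\gcd(m',3) = 1$, we have $x^m - 1 = (x^{m'}-1)^{3^j}$ as polynomials over $\mathbb{F}_q$, so the distinct monic irreducible factors of $x^m-1$ coincide with those of $x^{m'}-1$. The second, which is the crux, is that every irreducible factor of $x^{m'}-1$ has degree dividing $u = \mathrm{ord}_{m'}(q)$: such a factor is the minimal polynomial over $\mathbb{F}_q$ of a primitive $d$-th root of unity for some $d \mid m'$, its degree equals $\mathrm{ord}_d(q)$, and $d \mid m'$ together with $q^u \equiv 1 \pmod{m'}$ forces $\mathrm{ord}_d(q) \mid u$. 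Consequently each of the remaining irreducible factors $g_1,\ldots,g_k$ (those not dividing $g$) has degree \emph{exactly} $u$.

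With this identification, $\Delta$ collapses to $1 - k/q^u$ and, in the notation of Lemma \ref{Lambdaeq},
\[
\Lambda \;=\; \frac{(k-1)q^u}{q^u - k}+2.
\]
Two numerical estimates now suffice. Summing the degrees of $g_1,\ldots,g_k$ and comparing with $\deg(x^{m'}-1)$ yields $ku \leq m'$, hence $k \leq m'/u \leq m'/2$. Next, $u = \mathrm{ord}_{m'}(q)$ implies $m' \mid q^u - 1$, so $m' \leq q^u - 1 < q^u$. Combining these gives $k/q^u < 1/2$ and therefore $1 - k/q^u > 1/2$, so for $k \geq 2$ one obtains
\[
\Lambda - 2 \;<\; 2(k-1),
\]
and the target bound $\Lambda < 2k \leq m'$ follows at once.

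The degenerate values of $k$ are handled separately and cause no trouble. The case $k=0$ cannot actually occur, since $\mathrm{ord}_{m'}(q) = u$ guarantees that the $m'$-th cyclotomic polynomial contributes at least one factor of degree $u$. The case $k=1$ gives $\Lambda = 2$, and the hypothesis $u \geq 2$ combined with $\gcd(m',3)=1$ and the parity of $q = 3^r$ rules out $m' \in \{1,2,3\}$, so $m' \geq 4 \geq \Lambda$. I do not foresee any real obstacle: the whole argument rests on the divisibility observation $\mathrm{ord}_d(q) \mid u$ for $d \mid m'$, after which the numerics fall out cleanly; the only bookkeeping to be careful about is the two small-$k$ cases.
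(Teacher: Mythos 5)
Your proof is correct, and it follows essentially the route of the proof the paper omits (the one modelled on Lemma~7.2 of Cohen's paper cited as \cite{11}): identify the remaining factors $g_1,\dots,g_k$ as having degree exactly $u$, deduce $k\leq m'/u$ and $m'<q^u$ to get $\Delta>1/2$, and conclude $\Lambda<2k\leq m'$. Your explicit treatment of the degenerate cases $k=0$ and $k=1$ is a welcome piece of bookkeeping that the sources gloss over.
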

We shalll break the discussion into 5 cases (I--V).  Case IV will treat pairs $(q,m)$ for which  $m=2 \,\mathrm{gcd}(q-1, m^\prime)$ and Case V those for which  $m=4 \,\mathrm{gcd}(q-1, m^\prime)$. 

For cases I--III we suppose, after  Lemma \ref{rhobound}, that   $\vartheta(q,m)\leq \frac{1}{3}$. In this situation let $g$ be the product of irreducible polynomials dividing $x^m-1$ of degree less than $u$. Then for $\mathfrak{N}(q^m-1,q^m-1,x^m-1)>0$
\\ It is sufficient to show
\begin{equation*}
q^{m/2}> 3m\, (11.25)^2 q^{2m/5}2^{m\vartheta(q,m)}:
\end{equation*}
hence, by Lemma \ref{rhobound}, whenever
\begin{equation*}
 q^{m/10} > 3 (11.25)^2 m 2^{m/3},
\end{equation*}
and so whenever
\begin{equation*}
 q^{m/10} > 380  m 2^{m/3}. 
\end{equation*}

 Now, $m^\prime \nmid q-1$, where $q=3^r$, $m>4$, then the inequality becomes 
\begin{equation} \label{cond4}
3^{rm/10}> 380 \, m\, 2^{m/3} .
\end{equation}

\textbf{Case I:  $q\geq 81 $} but $m\neq 2(q-1,m')$ or $4(q-1,m')$.

We apply ( \ref{cond4}) with $r=4$. 
This is satisfied if $m\geq 47$. So let $m<47$, in this case $q^m< 6.17 \times 10^{87}$ and $\omega\leq 49$. Applying this and Lemma \ref{rhobound} to the condition (\ref{cond3}), we obtain the sufficient  condition 
\begin{equation} \label{cond5}
 q^{m/2}> 3\,m\,W^2\,2^{m/3}, \quad  W=W(q^m-1),
 \end{equation} which  holds for $m\geq 37$.

 Next we consider $m\leq 36$ and $q^m< 5.07\times 10^{68}$. Then $\omega \leq 40$ and (\ref{cond5}) holds for $m \geq 31$.

 Proceeding this way we  conclude that (\ref{cond5}) holds for $m\geq 21$. For the remaining pairs we apply (\ref{cond5}) but with the precise factorization of $q^m-1$ an conclude that  for $q\geq 81$, when $m^\prime\nmid q-1$, for  all   pairs $(q,m)$,  $\mathbb{F}_{q^m}$ contains a primitive normal element $\alpha$ such that $f(\alpha)$ is also primitive.

\textbf{Case II:  $q=27, m\neq 4,8$}

Now apply (\ref{cond4}) with $r=3$. 
This  holds for $m\geq 108$. For next stage, we assume $m\leq 107$, then $q^m< 1.43 \times 10^{153}$ and applying it on the condition $ q^{m/2}> 3\,m\,W^2\,2^{m/3}$, we have the condition holds for $m\geq 79$.  Similarly, take $m\leq 78$, i.e., $q^m< 4.42\times 10^{111}$. Here $\omega\leq 57$ and the condition holds for $m\geq 60$. Next, 
assume $m\leq 59$: thus $q^m< 2.812\times 10^{84}$ with $\omega\leq 47$. Then (\ref{cond4}) holds  for $m\geq 48$. Similarly,  we find that (\ref{cond4}) holds for $m\geq 38$.

At this stage, as before, we test whether (\ref{cond5}) hold using the precise factorization of $q^m-1$.  The pairs will this test are
\begin{center}
\begin{tabular}{|c|}
\hline 
$(27,5),\quad(27,6),\quad(27,10)$ \\ 
\hline 
\end{tabular} 
\end{center}

\textbf{Case III:  $q=9, m\neq 16,32 $}

 For $m^\prime>4$ and $m^\prime\nmid q-1$, then  we apply these towards the condition $$ q^{m/2}> 3\,m\, W^2\,2^{m/3}.$$

 By calculating we have the following possible exceptional pairs which do not satisfy the condition. 
\begin{center}
\begin{tabular}{|c|}
\hline 
$(9,5)\quad (9,7) \quad (9,15) $ \\ 
\hline 
\end{tabular} 
\end{center}

 Finally we have  two additional cases.

 \textbf{Case IV: $m= 2\, \mathrm{gcd}(q-1,m^\prime).$}
 
 In this case $\vartheta(q,m)=\frac{1}{2}$ and proceeding as above we have the sufficient condition as $$q^{m/10}>380 m 2^{m/2}.$$

 For $q\geq 81$, the condition holds for $m\geq 116$. Next let $m<116$ so that $q^m < 2.9 \times 10^{219}$ and $\omega\leq 101$. Then we apply the condition 
 $$ q^{m/2}> 3m W^2 2^{m/2} $$.
 
This holds for $m\geq 80$. Hence we consider $m< 80$ so that $q^m< 5.89 \times 10^{150}$ and $\omega\leq 74$. For these the condition holds for $m\geq 60$. 

Following this order we obtained that the condition holds for $m\geq 34$ and we check the condition individually. Such pairs are $(81,32), (729,16), (6561,64)$ etc., and conclude  that all the pairs satisfy the sufficient condition, i.e., no exceptional pair in this case. 

 The remaining possible pairs are $(27,4), (27,12)$ etc. and by repeating the same process we have (27,4) as the only possible  exceptional pair.

\textbf{Case V: $m= 4\, \mathrm{ gcd}(q-1, m^\prime)$}

Here $\vartheta(q,m)=\frac{3}{8}$. We proceed as above and have the possible exceptional pair  

\begin{center}
\begin{tabular}{|c|}
\hline 
$(27,8)$ \\ 
\hline 
\end{tabular} 
\end{center}

Finally, from the possibly exceptional pairs $(q,m)$  already listed we  eliminate the following pairs by further calculation given in the table below by means of condition (\ref{cond3}).

\begin{center}
\begin{tabular}{|c|c|c|c|c|c|c|c|}
\hline 
$(q,m)$ & $d$ & $n$ & $g$ & $k$ & $\Lambda$ &$q^{m/2}$ & $3W(d)^2\Omega(g)\Lambda$ \\ 
\hline 
(9,5) & 22 & 1 & $x+2$ & 1 & 4.0682 &   243   & 195.274 \\ 
\hline 
(9,7) & 1094 & 1 & $x+2$ & 1 & 4.00367 &      2187      & 192.176 \\ 
\hline 
(9,15) & 14 & 6 & $(x+2)$ & 1 & 23.4645 &     $1.43487 \times 10^7$   & 1126.3 \\ 
\hline
 (27,5) & 22 & 2 & $x+2$ & 1 & 67.72974 &      3788.           &2167.35 \\ 
\hline 
(27,6) & 26 & 4 & $x+2$ & 1 & 17.5253 & 531441 & 841.214\\
\hline 
(27,8) & 10 & 5 & $x^{8}-1$ & 0 & 20.5968 & 531441  & 31636.7 \\ 
\hline 
(27,10) & 14 & 6 & $(x+1)(x+2)$ & 2 & 25.2471 & 14348907 & 4847.44\\
\hline
\end{tabular}
\end{center}

\begin{center}
Table 2
\end{center}

After all these calculations, we have the following pair for $m^\prime\nmid q-1$, which do not satisfy the sufficient condition for the existence of primitive normal element $\alpha$ in $\mathbb{F}_{q^m}$ such that $f(\alpha)$ is also primitive.
\begin{center}
\begin{tabular}{|c|}
\hline 
$ (27,4)$ \\ 
\hline 
\end{tabular} 
\end{center}
\hfill$\square$

 As the conclusion of all the cases considered, we have our final theorem.
\begin{theorem}\label{final}
Let $\mathbb{F}_{q^m}$ be a finite field of characteristic $3$. Then there exists a primitive normal element $\alpha$ in $\mathbb{F}_{q^m}$such that $f(\alpha)$ is also primitive in $\mathbb{F}_{q^m}$, where $f(x)=ax^2+bx+c$, with $a,b,c\in \mathbb{F}_{q^m}$ and $b^2\neq ac$, unless $(q,m)$ is one of the following pairs.
\begin{center}
\begin{tabular}{|c|}
\hline 
$(3, 6), \quad (3,9),\quad  (9,3), \quad (9,4), \quad(9,6)$\\ 
$(27,2),\quad (27,3),\quad (27,4), \quad(81,3)$\\
\hline 
\end{tabular} 
\end{center} 
\end{theorem}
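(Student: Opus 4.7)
The plan is to combine the character-sum lower bound of Theorem 3.1 with the sieve refinement of Theorem 4.1, then reduce the existence problem to finitely many pairs $(q,m)$ by showing that (\ref{cond3}) holds asymptotically, and finally clear up the remaining finite list by ad hoc numerical checks using precise factorisations of $q^m-1$ and $x^m-1$. Throughout, since we are in characteristic $3$, we write $q=3^r$ and $m=m'3^j$ with $\gcd(m',3)=1$.

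The first step is to dispose of $m=2$ by invoking Lemma \ref{m2}: for $f\in\F[x]$ this already gives the result for every $q=3^r$, and a separate direct check handles the case $f\in\Fm[x]$ for small $q$, leaving $(27,2)$ as a genuine exception. For $m\ge 3$ I split according to whether $m'\mid q-1$ (Case A) or not (Case B). In Case A the sharpest bound on $\Lambda$ comes from Lemma \ref{Lambdaeq}, which gives $\Lambda<q^2$ and in fact $\Lambda\le q$ (or better) once $m'$ is not too small; combined with the Cohen-type estimate $W(n)<11.25\,n^{1/5}$ of Lemma \ref{Wbound}, (\ref{cond3}) with $g=1$ reduces to an explicit inequality of the shape
\[
q^{m/10}>c\,\Lambda,
\]
for a small constant $c$, which is satisfied for all but finitely many $(q,m)$. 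I would run the subcases $m'=1,2,4,\ldots,22$ in turn (as in the text) and then treat $m'\ge 23$ uniformly. In Case B I use Lemma \ref{rhobound} to bound $\vartheta(q,m)$, so that choosing $g$ as the product of the irreducible factors of $x^m-1$ of degree less than the order $u$ of $q\bmod m'$ gives $\Omega(g)\le 2^{m\vartheta(q,m)}$, and the analogous Lemma bounding $\Lambda\le m'$ reduces (\ref{cond3}) to
\[
q^{m/10}>380\,m\,2^{m/3},
\]
again true outside an explicit finite range. The five subcases I--V parallel the text: the generic $\vartheta\le 1/3$ situation split by the size of $q$, together with the two special configurations $m=2\gcd(q-1,m')$ and $m=4\gcd(q-1,m')$.

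After these asymptotic eliminations I am left with a concrete short list of candidate pairs. For each of them I factorise $q^m-1$ and $x^m-1$ completely and test (\ref{cond3}) directly, choosing the divisor $d\mid q^m-1$ to absorb the largest primes and, when profitable, a non-trivial polynomial divisor $g\mid x^m-1$ absorbing the largest irreducible factors, so as to make $\Delta$ positive and sizeable while keeping $W(d)$ and $\Omega(g)$ small. This is exactly the content of Tables 1 and 2, which between them eliminate every candidate except the nine pairs listed in the statement.

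The main obstacle is the bookkeeping of the sieve in the residual range: the bounds from Lemmas \ref{Lambdaeq}--\ref{rhobound} are crude enough that the cutoff produced by $q^{m/10}>380\,m\,2^{m/3}$ (or its Case A analogue) still leaves dozens of candidate pairs, and for each one the choice of $(d,g)$ in (\ref{cond3}) has to be tuned so that $3W(d)^2\Omega(g)\Lambda<q^{m/2}$ actually holds. Getting this tuning right---in particular showing that the nine listed pairs are genuinely resistant to every reasonable choice of $(d,g)$, while all the others admit a working choice---is where essentially all the work lies; the character-sum and sieve machinery itself is already in place from Sections~3 and~4.
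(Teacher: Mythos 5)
Your overall route is the same as the paper's for the bulk of the work: Lemma \ref{m2} for $m=2$, the split into $m'\mid q-1$ and $m'\nmid q-1$, the bounds of Lemmas \ref{Lambdaeq}--\ref{rhobound} feeding into condition (\ref{cond3}), the asymptotic inequalities $q^{m/10}>c\,\Lambda$ and $q^{m/10}>380\,m\,2^{m/3}$, and the final cleanup via Tables 1 and 2 with tuned choices of $(d,g)$. Up to that point your proposal is a faithful reconstruction of Sections 3--5.

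There is, however, one concrete gap. You assert that Tables 1 and 2 ``eliminate every candidate except the nine pairs listed in the statement.'' They do not: the pair $(3,3)$ survives every sieve test --- here $q^{m/2}=3^{3/2}\approx 5.2$, while the right-hand side of (\ref{cond3}) is at least $3\Lambda\geq 6$ for any admissible choice of $d$ and $g$, so no choice of sieving parameters can work --- and indeed $(3,3)$ appears in the paper's own list of surviving candidates for Case A. Since $(3,3)$ is \emph{not} in the final exception list, it must be removed by a different argument, and this removal is precisely what the paper's proof of Theorem \ref{final} consists of: a direct computation in $\mathbb{F}_{27}$ elaborating the method of Lemma \ref{m2}. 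One fixes $\beta$ with $\beta^3=\beta^2-1$, uses the fact that an element of $\mathbb{F}_{27}$ is normal over $\mathbb{F}_3$ exactly when its trace is nonzero to list the nine primitive normal elements, reduces the quadratics to the twelve representatives $\pm x^2\pm1$, $\pm x^2\pm x$, $\pm(x^2\pm x-1)$ under the symmetry $f\mapsto \pm f(\pm x)$, and checks that each admits a primitive value at one of $\beta$, $\beta-1$, $-\beta^2$. Without this step your argument would be forced to include $(3,3)$ among the exceptions, so the proposal as written does not establish the stated theorem.
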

\hfill$\square$
\begin{proof}
From the possible exceptions listed already it remains to exclude the pair $(3,3)$.  We do this by direct calculation through an elaboration of the method used to prove Lemma \ref{m2}.

An element $\alpha \in \mathbb{F}_{27}$ is normal over $ \mathbb{F}_3$ if and only if it has non- zero trace.  Define $\beta \in  \mathbb{F}_{27}$ such that $\beta^3=\beta^2-1$.  Then $\beta$ is a primitive normal element.   Moreover, $Tr(\beta)=Tr(\beta^2) =1$.  Calculate easily  the non-zero elements  $\alpha$ of $ \mathbb{F}_{27}$ as powers $j$ of $\beta$ and in the form $u\beta^2+v\beta+w$, where $u,v,w \in \{0,\pm1\}$.     Then, $\alpha$ is primitive if and only if gcd$(j,26)=1$: in particular if $\alpha$ is primitive then $-\alpha$ is not primitive. Further,  $\alpha$ is normal if and only if $u\neq v$.   From this working, there are nine  members of $ \mathbb{F}_{27}$ that are both primitive and normal, namely $\pm\beta,\pm \beta-1,  \beta^2 \pm1, -\beta^2-\beta \pm1, -\beta^2$.    Nine further members of $ \mathbb{F}_{27}$ are normal and three more are primitive, namely $-\beta^2+\beta+c$, where $c=0, \pm1$.  

We have to check that we can choose  a primitive normal element $\alpha$ such that $f(\alpha)$ is also primitive where $f(x)$ is any of the twelve quadratics $ \pm x^2\pm1, \pm x^2\pm x, \pm(x^2 \pm x -1)$.  This is easily verified even restricting $\alpha$ to the values $\beta,\beta-1, \-\beta^2$.
\end{proof}

It is likely that by means of computer calculation one can eliminate all the possible exceptional pairs listed in Theorem \ref{final} from being genuine exceptions.


\begin{thebibliography}{999}

    \bibitem{16}
   Anju and R.K.Sharma,
   Existence of some special primitive normal elements over finite fields,
   \emph{Finite Fields Appl.} 
   \textbf{46} (2017) 280-303. 

  \bibitem{1}
   A.R. Booker, S.D. Cohen, N. Sutherland and T. Trudgian,
   Primitive values of quadratic polynomials in a finite field,
   \emph{Math. Comp.}
   \textbf{88} (318) (2019) 1903-1912.     
   
   
      \bibitem{12}
   L.Carlitz,
   Primitive roots in a finite fields,
   \emph{Trans. Amer. Math. Soc.}
   \textbf{73}(3) (1952) 314-318. 
   
        \bibitem{11}
   S.D.Cohen,
   Pairs of primitive elements in fields of even order,
   \emph{Finite Fields Appl.}
   \textbf{28} (2014) 22-42. 
 
 
 
    \bibitem{3}
   S.D.Cohen and S.Huczynska,
   The primitive normal basis theorem-- without a computer,
   \emph{J. Lond. Math. Soc.}
   \textbf{67}(1) (2003) 41-56.
   
   

     \bibitem{4} 
   S.D.Cohen and S.Huczynska,
   The strong primitive normal basis theorem,
   \emph{Acta. Arith.}
   \textbf{143}(4) (2010) 299-332.
   
   
    
  
    \bibitem{7}
   L.Fu and D.Q.Wan,
   A class of incomplte character sums,
   \emph{Q.J.Math.Soc}
   \textbf{43}, (1968) 21-39.
   
      \bibitem{9}
   T.Garefalakis and G.Kapetanakis,
   On the existence of primitive completely normal bases of finite fields,
   \emph{J. Pure Appl. Algebra} 
     \textbf{223}(3)  (2018) 909-921.
   
      \bibitem{13}
   G.James and M.Liebeck,
   \emph{Representations and Characters of Groups},
   2nd edn.
   (Cambridge University Press, Cambridge, 2001).
 
 
    \bibitem{8}
   G. Kapetanakis,
   Normal bases and primitive elements over finite fields,
   \emph{Finite Fields Appl.}  
   \textbf{26}(2014) 123-143. 
   
    
 \bibitem{2}
   H.W.Lenstra,Jr. and R.J.Schoof,
   Primitive Normal Bases for Finite Fields,
   \emph{Math. Comp.}
   \textbf{48} (1987) 217-231.
   
   
      \bibitem{10}
   R. Lidl and H. Niederreiter,
   \emph{Finite Fields}
   2nd edn. 
   (Cambridge University Press, Cambridge, 1997).
 
 

   

    \bibitem{5}
   T. Tian and W.F. Qi,
   Primitive normal elements and its inverse in finite fields,
   \emph{Acta. Math. Sinica}(Chin. Ser.)
   \textbf{49}(3) (2006) 657-668.     
   
   \bibitem{6}
   D. Wan,
   Generators and irreducible polynomials over finite fields,
   \emph{Math. Comp.}
   \textbf{66}(219) (1997) 1195-1212.
   

   

 

   \bibitem{14}
   P.P. Wang, X.W. Cao and R.Q .Feng,
   On the existence of some specific elements in finite fields of characteristic 2
   \emph{Finite Fields Appl.}
   \textbf{18}(4) (2012) 800-8013.   
 
 
 
  
    
 
 \end{thebibliography}
\end{document}